\documentclass[12pt]{amsart}

\usepackage{graphicx}
\usepackage{subcaption}
\usepackage{xfrac}
\usepackage{faktor}
\usepackage{tikz-cd}
\usepackage{mathrsfs}
\usepackage{hyperref}
\usepackage{amsmath}
\usepackage{amssymb}
\usepackage{url}
\usepackage{comment}
\usepackage{xcolor}
\usepackage[T1]{fontenc}
\usepackage[utf8]{inputenc}

\hypersetup{
	bookmarks=true,
	unicode=true,
	colorlinks=true,
	citecolor=black,
	linkcolor=black,
	urlcolor=black,
	plainpages=false,
	pdfpagelabels=true
}

\newtheorem{teo}{Theorem}[section]
\newtheorem{theorem}[teo]{Theorem}

\newtheorem{lemma}[teo]{Lemma}
\newtheorem{prop}[teo]{Proposition}
\newtheorem{corollary}[teo]{Corollary}
\theoremstyle{definition}
\newtheorem{definition}[teo]{Definition}
\newtheorem{example}[teo]{Example}
\theoremstyle{remark}
\newtheorem{remark}[teo]{Remark}
\numberwithin{figure}{section}

\newcommand{\CC}{\mathcal{C}}

\newcommand{\Dc}{\mathcal{D}}

\newcommand{\id}{\mathrm{id}}

\newcommand{\Hom}{\mathrm{Hom}}
\newcommand{\Word}{\mathrm{Word}}
\newcommand{\Sub}{\mathrm{Sub}}
\newcommand{\FinInj}{\mathrm{FinInj}}

\newcommand{\Ob}{\mathrm{Ob}}

\newcommand{\Fork}{\mathcal{F}\!\mathit{ork}}

\begin{document}

\title[Subword representations]{Subword representations and weak hypercube dimension for acyclic categories}

\author[Isaac Carcacía-Campos]{Isaac Carcacía-Campos}
\address{Departamento de Matem\'aticas, Universidade de Santiago de Compostela, 15782-Spain}
\email{isaac.c.campos@usc.es}

\begin{abstract}
We introduce a categorical analogue of weak hypercube representations of finite posets by means of faithful embeddings into categories of subwords of finite words.
For finite acyclic categories, we characterize those admitting such a weak subword representation: they are precisely the monic categories whose hom-sets carry a left-compatible local total order.
The proof is constructive and gives an explicit word representation. We also introduce a query game for categories, generalizing a Boolean query game for posets, and show how winning sets produce explicit word representations and hence upper bounds for the weak word dimension.
\end{abstract}

\keywords{acyclic category, subword, hypercube dimension, posets, query game}
\subjclass[2020]{Primary: 18B35; Secondary: 06A07, 68R15}

\maketitle

\section{Introduction}

The abstract notion of a partial order encompasses many different kinds of relations. In mereology, for instance, it is usually assumed that the relation \textit{being a part of} is a partial order, possibly satisfying additional axioms \cite{Mereology}. There are, however, some caveats to this point of view.

For example, consider the word \textit{attack}. The one-letter words \textit{a} and \textit{t} are subwords of \textit{attack}, but each occurs in two distinct ways: the letter \textit{a} can be embedded in positions \(1\) or \(4\), and \textit{t} in positions \(2\) or \(3\). By contrast, \textit{c} and \textit{k} occur only once. Thus, a mere yes/no relation of parthood loses information about the number of possible embeddings. Similar phenomena arise when modelling processes with repeated homogeneous parts, such as music \cite{DiRienzo}.

This suggests replacing partially ordered sets by richer structures capable of recording not only whether one object is part of another, but also how many different ways this can happen. In this paper we use small acyclic categories for this purpose. 

Such a category may be viewed as a directed acyclic multigraph equipped with a transitive structure given by composition of morphisms in which we impose some relations \cite{Polygraph}. Every poset is a special case: it is a skeletal acyclic category with at most one morphism between any two objects, where reflexivity and transitivity are transmuted into identity morphisms and composition.

The central example for us is the category \(\Sub_w\) of subwords of a fixed word \(w\) over an alphabet \(\Sigma\). Its objects are the words that occur as subwords of \(w\), and its morphisms are the possible subword inclusions, i.e. injective order-preserving maps between positions that respect the labels.

A particularly important case occurs when all letters of \(w\) are distinct. Then \(\Sub_w\) is isomorphic to the Boolean lattice \(2^{[n]}\), where \(n\) is the length of \(w\). Thus categories of subwords generalize Boolean lattices by allowing repeated letters, and repeated letters produce parallel morphisms.

In order theory, several notions of dimension have been studied for finite posets. The classical one is the \emph{Dushnik--Miller dimension}, or order dimension, introduced in \cite{PartiallyOrderedSets}. It is the minimum number of linear extensions whose intersection is the given poset. Equivalently, it is the minimum number of chains whose product contains the poset as an order-embedded subposet \cite[chapter 10]{ORE}.

Another invariant is the \(2\)-dimension, or hypercube dimension \cite{optimalemb,Trotter_Embedding}. In its standard form, it is the least integer \(n\) for which the poset embeds as a subposet of the Boolean lattice \(2^{[n]}\). Equivalently, it is the least \(n\) for which there exists an order-preserving and order-reflecting map into \(2^{[n]}\).

In this paper we work with a weaker version. The \emph{weak \(2\)-dimension} of a finite poset \(P\), denoted \(\dim^{\mathrm{w}}_2(P)\), is the least integer \(n\) for which there exists an injective order-preserving map \[P \longrightarrow 2^{[n]}.\] Unlike the usual \(2\)-dimension, this weak version does not require incomparability to be reflected. For example, an antichain of size \(m\) has weak \(2\)-dimension \(\lceil \log_2 m\rceil\), while its usual \(2\)-dimension is governed by Sperner's theorem \cite{Sperner}.

The categorical notion studied here extends precisely this weak version. We define the \emph{weak word dimension} of a small acyclic category \(\CC\) to be the smallest non-negative integer \(n\) for which there exist an alphabet \(\Sigma\), a word \(w\) of length \(n\), and a faithful functor \[\CC \longrightarrow \Sub_w\] which is injective on objects. When \(\CC\) is a poset, this reduces to the weak \(2\)-dimension.

It is useful to keep in mind a stronger variant. One may ask the representing functor to be fully faithful; for posets, this recovers the usual \(2\)-dimension. The present paper focuses on the weak version. The strong version is substantially more rigid, because it requires that every subword inclusion between representing words come from a morphism of the original category.

Our main result gives a complete characterization of finite acyclic categories with finite weak word dimension. They are exactly the finite acyclic categories that are \emph{monic}, meaning that all morphisms are monomorphisms, and \emph{left locally totally ordered}, meaning that each hom-set carries a total
order compatible with postcomposition. The proof is constructive and gives an explicit algorithm for building a representing word.

We also study several basic constructions. In particular, we show that the class of finite acyclic categories satisfying the embedding criterion is stable under slices, certain functor categories, and finite limits.

Finally, we introduce a query game for categories related to the weak word dimension. A player secretly chooses a morphism \(f\colon x\to y\). The opponent selects a finite set \(B\) of test objects and first asks, for each \(b\in B\), the numbers of morphisms from \(b\) to \(x\) and to \(y\). If these data determine \(x\) and \(y\), the opponent then asks, for each \(b\in B\) and each \(h\colon b\to X\), for the composite \(f\circ h\).

We prove that a set \(B\) is winning exactly when it is both a \emph{cardinal separator}, distinguishing objects by the cardinalities of hom-sets from \(B\), and a \emph{separator}, distinguishing parallel morphisms by precomposition with morphism from objects of \(B\). In a poset the second condition is vacuous, and the first reduces to the Boolean query game: one identifies an element by asking which elements of a chosen subset lie below it.

A winning set supplies the data needed to construct a weak word representation and therefore gives an explicit upper bound for the weak word dimension. We illustrate the constructions with several examples.

\section{Dimensions of posets}\label{sec:poset-dim}

Let \(P\) be a finite poset. The dimension of a poset was introduced by Dushnik and Miller \cite{PartiallyOrderedSets}. We denote the order dimension of \(P\) by \(\dim(P)\). It is the smallest integer \(d\) for which there exist linear extensions \(L_1,\dots,L_d\) of \(P\) whose intersection is \(P\). Equivalently, \(P\) embeds, in the order-theoretic sense, into a product of \(d\) chains \cite[chapter 10]{ORE}. For a comprehensive account, see \cite{Trotter92} and to see relations with other combinatorial problems with posets \cite{Winter2024_1000173294}.

There is also a family of related invariants parametrized by positive integers. For \(k\geq 2\), the \emph{\(k\)-dimension} of \(P\), denoted \(\dim_k(P)\), is the least integer \(n\) such that \(P\) embeds as a subposet of the product of \(n\) chains of cardinality \(k\). For \(k=2\), this is the
problem of embedding \(P\) into a hypercube:
\[
2^{[n]} \cong \{0,1\}^n,
\] ordered coordinatewise, or equivalently by inclusion of subsets. This invariant is often called the \emph{\(2\)-dimension} or \emph{hypercube dimension}.

The order dimension and the \(k\)-dimension satisfy
\[
\dim(P)\leq \dim_k(P).
\]
Indeed, an embedding into a product of chains of cardinality \(k\) is in particular an embedding into a product of chains. The converse need not hold.

For our categorical purposes, we shall also use a weak version.

\begin{definition}
Let \(P\) be a finite poset. The \emph{weak \(2\)-dimension} of \(P\), denoted \(\dim^{\mathrm{w}}_2(P)\), is the least integer \(n\) such that there exists an injective order-preserving map
\[
P\longrightarrow 2^{[n]}.
\]
Thus \(\dim^{\mathrm{w}}_2(P)\) only requires preservation of order, whereas the usual \(2\)-dimension \(\dim_2(P)\) requires an order embedding, i.e. both preservation and reflection of order.
\end{definition}

The distinction is important. If \(P\) embeds into \(2^{[n]}\) in the usual sense, then it also admits an injective order-preserving map into \(2^{[n]}\).
Hence
\[
\dim^{\mathrm{w}}_2(P)\leq \dim_2(P).
\]
The inequality can be strict.

\begin{example}
Let \(A_m\) be an antichain of \(m\) elements, with \(m\geq 2\). For the Dushnik--Miller dimension we have
\[
\dim(A_m)=2.
\]
Indeed, choose any linear order \(L\) of the elements and let \(L'\) be its reverse. The intersection of \(L\) and \(L'\) contains only the reflexive comparabilities, hence it is the antichain.

For the weak \(2\)-dimension, no incomparabilities need to be reflected. Thus it is enough to assign distinct subsets of an \(n\)-element set to the \(m\) points. Therefore
\[
\dim^{\mathrm{w}}_2(A_m)=\lceil \log_2 m\rceil.
\]

By contrast, for the usual \(2\)-dimension, the image of an antichain must be an antichain in the Boolean lattice. By Sperner's theorem \cite{Sperner}, the largest antichain in \(2^{[n]}\) has cardinality
\(\binom{n}{\lfloor n/2\rfloor}\). Hence
\[
\dim_2(A_m)=
\min\left\{n:\binom{n}{\lfloor n/2\rfloor}\geq m\right\}.
\]
Thus the weak and strong versions differ already on antichains.
\end{example}

In the rest of the paper, our categorical constructions extend the weak \(2\)-dimension. The strong version, corresponding to fully faithful representations, is more rigid and will not be characterized here.

\section{Acyclic categories as combinatorial objects}

We briefly recall the basic notions of category theory and fix some notation. For a thorough introduction, we refer the reader to the standard literature \cite{Basic_Category,CatWorking}. Recall that a \emph{category} \(\mathcal{C}\) consists of a collection of objects and, for each ordered pair of objects \(c,d\), a collection of morphisms, or arrows, \(f\colon c\to d\), together with an associative composition operation
\[
(g,f)\longmapsto g\circ f\colon c\to e
\] for all morphisms \(f\colon c\to d\) and \(g\colon d\to e\), and for each object \(c\) an identity morphism \(\id_c\colon c\to c\) that acts as a unit for composition.

A category is \emph{small} if its collections of objects and morphisms are sets. In this paper we work exclusively with small categories.

A \emph{functor} \(F\colon\mathcal{C}\to\mathcal{D}\) between categories sends objects to objects and morphisms to morphisms, preserving identities and composition: 
\[F(\id_c)=\id_{F(c)},\qquad F(g\circ f)=F(g)\circ F(f).\]
We focus on acyclic categories as generalizations of posets, as usual in the literature \cite[chapter 10]{Kozlov}; see also \cite{Euler_cat,HomotopicDistance} for related examples of how to apply ideas from posets to small (acyclic) categories.

\begin{definition}[Acyclic category]
A small category \(\mathcal{C}\) is \emph{acyclic} if for every pair of distinct objects \(c\neq d\),
\[
\Hom(c,d)\neq\varnothing \quad\Longrightarrow\quad \Hom(d,c)=\varnothing,
\]
and for every object \(c\) the only endomorphism is the identity:
\[
\Hom(c,c)=\{\id_c\}.
\]
\end{definition}

Acyclicity forbids directed cycles of positive length: there are no
non-trivial loops \(c\to c\) and no non-trivial cyclic chains of morphisms
\[
c_1\to c_2\to\cdots\to c_n\to c_1.
\]
The second condition is not a consequence of excluding non-trivial endomorphisms alone: in a general category a cycle may compose to an identity, as happens for isomorphisms. Thus acyclicity rules out both non-trivial endomorphisms and cycles between distinct objects.

\begin{definition}[Weak and strong embeddings of categories]
A functor \(F\colon\mathcal{C}\to\mathcal{D}\) is a \emph{weak embedding} if it is faithful and injective on objects, that is:
\begin{itemize}
\item for every pair of objects \(c,d\) in \(\mathcal C\), the map
\[
F_{c,d}\colon \Hom_{\mathcal C}(c,d)\to
\Hom_{\mathcal D}(F(c),F(d))
\]
is injective;
\item for objects \(c\neq d\), one has \(F(c)\neq F(d)\).
\end{itemize}

A functor \(F\colon\mathcal{C}\to\mathcal{D}\) is a \emph{strong embedding} if it is fully faithful and injective on objects, hence the image of the functors is a full subcategory.
\end{definition}

\begin{remark}
For posets viewed as categories, a weak embedding is precisely an injective order-preserving map. A strong embedding is an order embedding: it preserves and reflects the order. Thus strong embeddings recover the usual notion of subposet embedding, whereas weak embeddings correspond to the weak representations studied in this paper.
\end{remark}

\begin{remark}
In many categorical settings one works with isomorphism classes of objects, because a category can contain many isomorphic copies of the same object without changing the essential structure. In an acyclic category, however, isomorphisms are necessarily identities. Indeed, if there exist morphisms \(f\colon c\to d\) and \(g\colon d\to c\), acyclicity forces \(c=d\), and then the only endomorphism is \(\id_c\). Consequently, every isomorphism is trivial, and two distinct objects cannot be isomorphic.
\end{remark}

\subsection{Linear extensions of acyclic categories}

A finite acyclic category admits an analogue of a topological ordering of a directed acyclic graph: a total order of the objects compatible with the existence of morphisms. More precisely, if there is a morphism \(c\to d\), then \(c\) must appear before \(d\) in the order.

For a small acyclic category \(\CC\), the relation ``there exists a morphism \(c\to d\)'' is a partial order on \(\Ob(\CC)\). Reflexivity follows from identity morphisms, transitivity from composition, and antisymmetry from acyclicity. We denote this poset by \(\Pi(\CC)\).

A \emph{linear extension} of \(\CC\) is a linear extension of the poset \(\Pi(\CC)\), i.e. a total order \(\preceq\) on \(\Ob(\CC)\) such that
\[
\Hom(c,d)\neq\varnothing \quad\Longrightarrow\quad c\preceq d.
\]

\begin{lemma}[{\cite[section 10.1.2]{Kozlov}}]\label{lem:linear-extension}
Every acyclic category admits a linear extension of its objects.
\end{lemma}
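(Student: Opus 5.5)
The plan is to reduce the statement to the classical fact that every partial order admits a linear extension (Szpilrajn's theorem), applied to the poset \(\Pi(\CC)\) introduced just above. Let \(\CC\) be a small acyclic category. We have already observed that the relation \(c\le d \iff \Hom(c,d)\neq\varnothing\) is a partial order on \(\Ob(\CC)\). Reflexivity comes from \(\id_c\), and transitivity from composing \(f\colon c\to d\) and \(g\colon d\to e\) into \(g\circ f\colon c\to e\). Antisymmetry comes from the first clause of acyclicity: if \(c\neq d\) and \(\Hom(c,d)\neq\varnothing\), then \(\Hom(d,c)=\varnothing\). By definition, a linear extension of \(\CC\) is exactly a total order \(\preceq\) on \(\Ob(\CC)\) containing this partial order. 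So it suffices to show that \(\Pi(\CC)\) has a linear extension.

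In the finite case, which is the one used in the rest of the paper, I would argue by induction on \(|\Ob(\CC)|\). A nonempty finite poset has a minimal element \(m\): start anywhere and step down strictly; antisymmetry and transitivity prevent revisiting an element, and finiteness forces the descent to stop. Remove \(m\) and apply the induction hypothesis to the restricted order on \(\Ob(\CC)\setminus\{m\}\), which is still a partial order. Then place \(m\) first. Compatibility holds because if \(c\le m\) then \(c=m\) by minimality, while relations among the remaining objects are respected by the inductive order. Equivalently, this is the topological sort of the underlying DAG of nontrivial morphisms.

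For arbitrary small \(\CC\), I would invoke Szpilrajn's extension theorem. Consider the set of partial orders on \(\Ob(\CC)\) that contain \(\le\), ordered by inclusion. The union of a chain of such orders is again one of them, so Zorn's lemma gives a maximal element \(R\). If \(a,b\) were \(R\)-incomparable, the transitive closure of \(R\cup\{(a,b)\}\), namely \(R\cup\{(x,y): x\,R\,a,\ b\,R\,y\}\), would be a strictly larger partial order. Its antisymmetry follows from the incomparability of \(a\) and \(b\). This contradicts maximality, so \(R\) is total.

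The only step that needs real care is this antisymmetry check in the infinite case. The finite case, which suffices for the paper's purposes, has no genuine obstacle beyond the existence of a minimal element.
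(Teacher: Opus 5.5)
Your proposal is correct and takes the same route as the paper. The paper gives no separate argument: it cites Kozlov and relies on the observation just before the lemma that \(\Pi(\CC)\) is a partial order, so a linear extension of \(\CC\) is by definition a linear extension of this poset, and you additionally supply the standard proof of the poset fact (minimal-element induction in the finite case, Szpilrajn's theorem via Zorn's lemma in general).
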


\section{The category of subwords}

Fix an alphabet \(\Sigma\). A \emph{word} of length \(n\) is a function \(w\colon [n]\to\Sigma\), where \([n]=\{1,\dots,n\}\) and \([0]=\varnothing\). We will usually write the word by appending all the letters in the usual order:
\[
w=w(1)\cdots w(n).
\]

\begin{definition}
Given two words \(w\colon[n]\to\Sigma\) and \(w'\colon[m]\to\Sigma\), a \emph{subword inclusion} is an injective order-preserving map \(f\colon[m]\to[n]\) such that
\[
w\circ f=w'.
\]
\end{definition}

\begin{remark}
We will represent a morphism \(f\colon[m]\to[n]\) as
\[
(f(1),f(2),\dots,f(m)).
\]
For example, the map \(f\colon[3]\to[10]\) given by \(f(1)=3\),
\(f(2)=6\), and \(f(3)=10\) will be represented by \((3,6,10)\).
\end{remark}

The class of all words over \(\Sigma\) becomes a small category \(\Word(\Sigma)\), whose morphisms are the subword inclusions. For a fixed word \(w\), we denote by \(\Sub_w\) the full subcategory of \(\Word(\Sigma)\) spanned by all words that admit a morphism into \(w\).

\begin{remark}
The definition of a subword as an order-preserving injective map, i.e. a subsequence, permits non-contiguous occurrences. For instance, in the word \(abc\), the letters \(a\) and \(c\) form the subword \(ac\), despite the gap. This may conflict with everyday linguistic intuition, where a ``part of'' a
word is often understood as a contiguous substring.
\end{remark}

The category \(\Sub_w\) is acyclic: any endomorphism is the identity, and any pair of morphisms going back and forth forces the two objects to be identical.

The length of a word provides a canonical linear extension into the natural numbers
\[
\mathbb{N}=\{0<1<2<\cdots\}.
\]
Indeed, if there is a subword inclusion \(w'\to w\), then the domain word has length at most the length of the codomain word, and equality holds only for identities. Thus the function
\[
\ell\colon\Ob(\Word(\Sigma))\to\mathbb N
\]
sending a word to its length is a linear extension. This property is inherited by any full subcategory \(\Sub_w\).

\begin{prop}
If \(w\colon[n]\to\Sigma\) is injective, then \(\Sub_w\) is isomorphic to the Boolean lattice \(2^{[n]}\) of subsets of an \(n\)-element set.
\end{prop}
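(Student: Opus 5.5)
We will construct an explicit isomorphism of categories \(\Phi\colon \Sub_w\to 2^{[n]}\), where \(2^{[n]}\) is viewed as a poset category with a unique morphism \(S\to T\) exactly when \(S\subseteq T\). The idea is that when \(w\) is injective, a subword of \(w\) is determined by the set of positions it occupies, and there is at most one way for it to occupy them.

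\textbf{Objects.} First we show that for each object \(v\colon[m]\to\Sigma\) of \(\Sub_w\) there is exactly one morphism \(v\to w\). Existence holds by definition of \(\Sub_w\). For uniqueness, suppose \(f,g\colon[m]\to[n]\) both satisfy \(w\circ f=v=w\circ g\). Since \(w\) is injective, \(f=g\). Call this morphism \(\iota_v\), and set \(\Phi(v)=\iota_v([m])\subseteq[n]\).

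Conversely, given \(S\subseteq[n]\) with \(|S|=m\), let \(e_S\colon[m]\to[n]\) be its unique increasing enumeration. The word \(w\circ e_S\) is a subword of \(w\) with \(\Phi(w\circ e_S)=S\). Moreover, \(v=w\circ\iota_v\), and \(\iota_v\) is the increasing enumeration of its image, so \(v=w\circ e_{\Phi(v)}\). Hence \(\Phi\) is a bijection on objects. Note that objects are actual words, not isomorphism classes, which is consistent with \(\Sub_w\) being acyclic.

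\textbf{Morphisms.} We must show that \(\Hom(v,v')\) has at most one element, and is nonempty exactly when \(\Phi(v)\subseteq\Phi(v')\).
\begin{itemize}
\item \emph{At most one element.} If \(h\colon v\to v'\) is a morphism, then \(\iota_{v'}\circ h\) is a morphism \(v\to w\), so it equals \(\iota_v\). Since \(\iota_{v'}\) is injective, \(h\) is determined by this equation.
\item \emph{Forward inclusion.} The same equation shows that a morphism \(v\to v'\) forces \(\Phi(v)=\iota_v([m])\subseteq\iota_{v'}([m'])=\Phi(v')\).
\item \emph{Converse.} Suppose \(\Phi(v)\subseteq\Phi(v')\). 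Define \(h=\iota_{v'}^{-1}\circ\iota_v\), which makes sense on \(\Phi(v')\). It is injective and order-preserving, being a composite of order-isomorphisms and increasing injections. It also respects labels: \(v'\circ h=w\circ\iota_{v'}\circ h=w\circ\iota_v=v\).
\end{itemize}

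Thus \(\Sub_w\) is thin, and \(\Phi\) is a bijection on objects that preserves and reflects the existence of morphisms. Identities and composition are preserved automatically because both categories are thin. Therefore \(\Phi\) is an isomorphism of categories.

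The main point, rather than an obstacle, is the uniqueness step \(w\circ f=w\circ g\Rightarrow f=g\). This is exactly where injectivity of \(w\) is used. Without it, repeated letters give parallel morphisms and \(\Sub_w\) fails to be thin. The remaining checks are routine bookkeeping with increasing enumerations.
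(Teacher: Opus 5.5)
Your proof is correct and is essentially the paper's argument. The paper sends a subword \(w'\colon[m]\to\Sigma\) to its set of letters \(w'([m])\subseteq w([n])\), which under the bijection \(w\colon[n]\to w([n])\) is exactly your set of positions \(\iota_v([m])\). Your version also makes explicit that \(\Sub_w\) is thin (uniqueness of inclusions via injectivity of \(w\)), which the paper's one-line proof leaves implicit in ``strictly preserves the subword relation''.
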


\begin{proof}
Map a subword \(w'\colon[m]\to\Sigma\) to its image \(w'([m])\subseteq w([n])\). Because all letters are distinct, this correspondence is bijective and strictly preserves the subword relation.
\end{proof}

Thus the Boolean lattice representation of posets appears as the special case where the word has no repeated letters. When letters may repeat, the category \(\Sub_w\) becomes richer and can accommodate parallel morphisms.

\begin{example}\label{ex:aa}
Consider the word \(w=aa\). Its subwords are \(\varnothing\), \(a\), and \(aa\). There are two distinct inclusions of \(a\) into \(aa\), namely the maps sending the single position to position \(1\) or to position \(2\). The category \(\Sub_{aa}\) is depicted as follows:
\[
\begin{tikzcd}
aa & \\
a \arrow[u, "i_1", bend left] \arrow[u, "i_2"', bend right] & \\
\varnothing \arrow[u, "i"] &
\end{tikzcd}
\]
where \(i_1\circ i=i_2\circ i\).
\end{example}

\begin{example}
Let \(w=aba\). The subwords are
\[
\varnothing,\quad a,\quad b,\quad ab,\quad ba,\quad aa,\quad aba.
\]
The category \(\Sub_{aba}\) has the following diagram, with the commutations given by inclusions of subwords:
\[
\begin{tikzcd}
& aba & \\
ab \arrow[ru] & aa \arrow[u] & ba \arrow[lu] \\
a \arrow[u] \arrow[rru, bend right] \arrow[ru, bend left] \arrow[ru, bend right]
& {} \arrow[u]
& b \arrow[u] \arrow[llu] \\
& \varnothing \arrow[lu] \arrow[ru] &
\end{tikzcd}
\]
\end{example}

\subsection{Differences with the Boolean case}
\label{sec:differences-boolean}

As soon as letters may repeat, the structure changes considerably. We collect some of the most salient differences.

\subsubsection{Initial and terminal objects}

Every \(\Sub_w\) has an initial object, namely the empty word. In the Boolean case there is also a terminal object, the whole word \(w\), because there is exactly one inclusion of any subword into \(w\).

With repeated letters, \(w\) is no longer terminal. Indeed, if a letter occurs at least twice in \(w\), then the one-letter word corresponding to that letter has at least two distinct morphisms into \(w\). Since any terminal object would have to receive a morphism from \(w\), and \(w\) has maximal length, the only possible terminal object is \(w\) itself. Hence \(\Sub_w\) has a terminal object if and only if every letter occurring in \(w\) occurs exactly once.

\subsubsection{Products and coproducts}

A Boolean lattice has all finite products, given by intersection, and
coproducts, given by union. In contrast, the presence of parallel morphisms destroys binary products and coproducts in \(\Sub_w\).

We show both phenomena using Example~\ref{ex:aa}. First, the object \(aa\) has no product with itself in \(\Sub_{aa}\). Suppose that a product \(P=aa\times aa\) exists, with projections \(p_1,p_2\colon P\to aa\). Applying the universal property to \((\id_{aa},\id_{aa})\), there must be a unique morphism \(u\colon aa\to P\) such that
\[
p_1\circ u=\id_{aa},
\qquad
p_2\circ u=\id_{aa}.
\]
Since there is no morphism \(aa\to\varnothing\) or \(aa\to a\), the morphism \(u\) can only land in \(aa\). Thus \(P=aa\) and
\(p_1=p_2=\id_{aa}\).

Now take \(Q=a\) and the two distinct inclusions
\(i_1,i_2\colon a\to aa\). The universal property would require a unique morphism \(h\colon a\to aa\) such that
\[
p_1\circ h=i_1,
\qquad
p_2\circ h=i_2.
\]
Since \(p_1=p_2=\id_{aa}\), this gives \(h=i_1\) and \(h=i_2\), a contradiction. Therefore \(aa\times aa\) does not exist.

For coproducts, we show that two copies of \(a\) have no coproduct in \(\Sub_{aa}\). A coproduct of \(a\) and \(a\) would be an object \(C\) with morphisms \(\iota_1,\iota_2\colon a\to C\) such that, for any object \(X\) and any pair \(f,g\colon a\to X\), there is a unique morphism
\([f,g]\colon C\to X\) satisfying
\[
[f,g]\circ\iota_1=f,
\qquad
[f,g]\circ\iota_2=g.
\]

Any candidate \(C\) must be either \(a\) or \(aa\). If \(C=a\), then
\(\iota_1=\iota_2=\id_a\). Taking \(X=aa\), \(f=i_1\), and \(g=i_2\), we would need \(i_1=i_2\), impossible.

If \(C=aa\), then each \(\iota_j\colon a\to aa\) is either \(i_1\) or \(i_2\). If \(\iota_1\neq\iota_2\), assume without loss of generality that \((\iota_1,\iota_2)=(i_1,i_2)\). Taking \(X=aa\) and \(f=g=i_1\), the only morphism \(aa\to aa\) is the identity, but it does not send both \(i_1\) and \(i_2\) to \(i_1\).

If \(\iota_1=\iota_2=i_j\), choose \(k\neq j\), take \(X=aa\), and set \(f=i_k\), \(g=i_j\). Then a morphism \([f,g]\colon aa\to aa\) would have to satisfy
\[
[f,g]\circ i_j=i_k
\quad\text{and}\quad
[f,g]\circ i_j=i_j,
\]
which is impossible. Thus no coproduct of \(a\) and \(a\) exists in \(\Sub_{aa}\).

\subsubsection{Permutations of the word}

In the Boolean case all letters are distinct, and any permutation of the positions yields an isomorphic subword category, also isomorphic to the boolean lattice \(2^{[n]}\). This is no longer true when
letters repeat.

\begin{example}
Let \(w=aba\) and \(w'=aab\). Their subword categories have different sets of objects:
\[
\begin{array}{c|c}
\Sub_{aba} & \Sub_{aab} \\ \hline
\varnothing & \varnothing \\
a & a \\
b & b \\
aa & aa \\
ab & ab \\
ba & \text{(none)} \\
aba & aab
\end{array}
\]
The word \(ba\) belongs to \(\Sub_{aba}\) but not to \(\Sub_{aab}\). Hence the two categories cannot be isomorphic; since they are acyclic, they are not even equivalent.
\end{example}

One might ask whether the grouped form
\[
w^*=a_1^{m_1}\cdots a_k^{m_k},
\]
where identical letters are grouped together, admits a faithful embedding into the original \(\Sub_w\), or conversely. The following example shows that such an embedding need not exist in one direction, whereas the other case is a consequence of the previous example.

\begin{example}
Let \(w=baba\), and let its grouped form, in first-occurrence order, be \(w^*=bbaa\). The object \(ba\) has exactly four inclusions into \(bbaa\): choose one of the two \(b\)'s and one of the two \(a\)'s independently to obtain the four cases 
\[
(1,3),\quad (1,4),\quad (2,3), \quad (2,4).
\]

In \(\Sub_{baba}\), the only object of length \(4\) is \(baba\), and the number of inclusions of \(ba\) into \(baba\) is \(3\), namely
\[
(1,2),\quad (1,4),\quad (3,4).
\]
Any faithful functor \(\Sub_{bbaa}\to\Sub_{baba}\) would have to send the maximal object \(bbaa\) to an object of length \(4\), hence to \(baba\). It would then induce an injective map
\[
\Hom(ba,bbaa)\longrightarrow \Hom(E(ba),baba),
\]
which is impossible. The domain has cardinality \(4\) and the codomain has cardinality at most \(3\). Indeed, by a non very tedious verification for every object \(u\in\Sub_{baba}\),
\[
|\Hom_{\Sub_{baba}}(u,baba)|\le 3.
\]
where the maximum is attained for \(u=ba\), with inclusions
\[
(1,2),\quad (1,4),\quad (3,4).
\] The one-letter objects \(a\) and \(b\) have two inclusions into \(baba\), while all other objects have at most one. Hence the codomain has cardinality at most \(3\), contradicting faithfulness. Thus \(\Sub_{bbaa}\) does not embed faithfully into \(\Sub_{baba}\).
\end{example}

\subsection{Decomposition as a product of thickened chains}

For a word consisting of a single letter repeated \(m\) times, \(w=x^m\), the subword category \(\Sub_{x^m}\) admits a simple description. Its objects are
\[
x^0,x^1,\dots,x^m,
\]
where \(x^0\) is the empty word, and a morphism \(x^i\to x^j\), for \(i\leq j\), is a strictly increasing injection \([i]\to[j]\). There are exactly
\[
\binom{j}{i}
\]
such morphisms.

\begin{remark}\label{re:FinInj}
This means that \(\Sub_{x^m}\) is isomorphic to the full subcategory of
\(\FinInj_{\mathrm{ord}}\) whose objects are the standard ordinals \([0],[1],\dots,[m]\). Here \(\FinInj_{\mathrm{ord}}\) denotes the category whose objects are finite totally ordered sets and whose morphisms are strictly increasing maps. The labelling by the single letter \(x\) can be forgotten.
\end{remark}

When the word has several distinct letters grouped contiguously, the subword category decomposes as a product of these thickened chains.

\begin{prop}\label{prop:product-decomposition}
Let
\[
w=a_1^{m_1}a_2^{m_2}\cdots a_k^{m_k},
\]
where the \(a_i\) are distinct. Then
\[
\Sub_w
\cong
\Sub_{a_1^{m_1}}\times \Sub_{a_2^{m_2}}\times\cdots\times
\Sub_{a_k^{m_k}}.
\]
\end{prop}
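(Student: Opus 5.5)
We construct explicit mutually inverse functors between \(\Sub_w\) and the product. Write \(B_i\subseteq[n]\) for the block of positions carrying the letter \(a_i\), so \(B_1<B_2<\cdots<B_k\) as intervals and \(|B_i|=m_i\). The first step is to describe objects. A word \(u\) admits a morphism into \(w\) only if \(u=a_1^{r_1}\cdots a_k^{r_k}\) with \(0\le r_i\le m_i\). Indeed, if \(f\colon[\ell(u)]\to[n]\) is a subword inclusion, then \(u=w\circ f\) reads the letters of \(w\) at increasing positions. Since the blocks occur in order, the letters of \(u\) are grouped in the same order, and at most \(m_i\) of them lie in \(B_i\). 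Conversely, every such tuple is realized. Hence objects of \(\Sub_w\) correspond bijectively to tuples \((r_1,\dots,r_k)\), that is, to objects \((a_1^{r_1},\dots,a_k^{r_k})\) of the product. Because the \(a_i\) are distinct, the exponents \(r_i\) are recovered from \(u\), so this correspondence is well defined.

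The second step is to define the functor on morphisms. Let \(u=a_1^{r_1}\cdots a_k^{r_k}\) and \(v=a_1^{s_1}\cdots a_k^{s_k}\) be objects, and let \(f\colon u\to v\) be a subword inclusion. Label compatibility \(v\circ f=u\) forces \(f\) to send the \(a_i\)-block of \(u\) into the \(a_i\)-block of \(v\). This is the key use of distinctness of the letters. Restricting and reindexing gives strictly increasing maps \(f_i\colon[r_i]\to[s_i]\), that is, morphisms \(a_i^{r_i}\to a_i^{s_i}\) in \(\Sub_{a_i^{m_i}}\) as in Remark~\ref{re:FinInj}. Set \(\Phi(f)=(f_1,\dots,f_k)\). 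Preservation of identities and composition is immediate, since restriction to blocks commutes with composition.

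The third step is the inverse. Given strictly increasing \(f_i\colon[r_i]\to[s_i]\), we concatenate them by shifting: position \(j\) of the \(a_i\)-block of \(u\) is sent to position \(f_i(j)\) of the \(a_i\)-block of \(v\). The resulting map is order preserving. Within a block this holds because each \(f_i\) is increasing. Across blocks it holds because the \(a_i\)-block precedes the \(a_{i+1}\)-block in both \(u\) and \(v\). The map is injective, respects labels, and is therefore a subword inclusion. The two constructions are clearly mutually inverse on hom-sets and on objects, which gives an isomorphism of categories.

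The main obstacle is the second step: one must verify carefully that every subword inclusion is block-preserving, and not merely that it preserves letters pointwise. Pointwise preservation gives \(f(\text{positions of }a_i\text{ in }u)\subseteq\text{positions of }a_i\text{ in }v\). Because the positions of \(a_i\) in a grouped word form exactly one interval, this is precisely the block condition, so the only real content is the grouped form of objects established in the first step.
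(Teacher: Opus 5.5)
Your proposal is correct and takes the same route as the paper: objects of \(\Sub_w\) are the grouped words \(a_1^{r_1}\cdots a_k^{r_k}\) with \(r_i\le m_i\), every subword inclusion splits blockwise into strictly increasing maps because the letters are distinct and the blocks do not interleave, and concatenation \((u_1,\dots,u_k)\mapsto u_1\cdots u_k\) is the inverse isomorphism. You simply spell out in more detail what the paper states briefly, namely why inclusions preserve blocks and why the concatenated map is strictly increasing across blocks.
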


\begin{proof}
Any subword of \(w\) is uniquely determined by choosing, for each letter \(a_i\), a subword of the block \(a_i^{m_i}\). These choices are independent because the letters are distinct and the blocks do not interleave. A morphism between two such subwords is a family of strictly increasing injections, one for each block. Hence the assignment
\[
(u_1,\dots,u_k)\longmapsto u_1\cdots u_k
\]
defines an isomorphism between the product category and \(\Sub_w\).
\end{proof}

When all \(m_i=1\), each factor \(\Sub_{a_i}\) is the two-element chain \(0<1\), viewed as a category. Hence the product is the Boolean lattice \(2^{[k]}\). Thus the weak \(2\)-dimension of posets corresponds to the minimal length of a word without repeated letters needed for an injective order-preserving representation, while the general theory replaces the factor \(0<1\) by \textit{thickened chains} carrying parallel morphisms that are represented by the categories \(\Sub_{x^m}\).

\section{When does a category embed into a word?}

We now characterize the finite acyclic categories that admit weak embeddings into categories of subwords. Throughout the rest of the paper, \emph{word-embeddable} means weakly word-embeddable unless explicitly stated otherwise.

\begin{definition}
Let \(\CC\) be a finite acyclic category. The \emph{weak word dimension} of \(\CC\), denoted \(\operatorname{wwdim}(\CC)\), is the least non-negative integer \(n\) for which there exist an alphabet \(\Sigma\), a word \(w\) over \(\Sigma\) of length \(n\), and a weak embedding
\[
\CC\longrightarrow \Sub_w.
\]
If no such word exists, we set \(\operatorname{wwdim}(\CC)=\infty\).
\end{definition}

\begin{remark}
One can similarly define a \emph{strong word dimension} by requiring the representing functor to be fully faithful. For posets, the strong version recovers the usual \(2\)-dimension, while the weak version recovers \(\dim^{\mathrm{w}}_2\). The present paper characterizes the weak version.
\end{remark}

\subsection{The monic condition and the fork category}

\begin{definition}
A morphism \(f\colon c\to d\) in a category is a \emph{monomorphism}, or \emph{monic}, if for any two morphisms \(g,h\colon b\to c\),
\[
f\circ g=f\circ h \quad\Longrightarrow\quad g=h.
\]
A category \(\CC\) is \emph{monic} if every morphism is a monomorphism.
\end{definition}

In the category of subwords, every morphism is an injective order-preserving map, and hence is monic.

\begin{lemma}\label{lem:faithful-reflects-monic}
Let \(F\colon\CC\to\Dc\) be a faithful functor. If \(F(f)\) is monic, then
\(f\) is monic.
\end{lemma}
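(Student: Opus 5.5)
The plan is a direct diagram chase: transport the hypothesis \(f\circ g=f\circ h\) along \(F\), use that \(F(f)\) is monic in \(\Dc\), and then pull the resulting equality back with faithfulness. No structure of \(\CC\) or \(\Dc\) beyond functoriality is needed; in particular acyclicity plays no role.

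First, fix \(f\colon c\to d\) in \(\CC\) and two morphisms \(g,h\colon b\to c\) with \(f\circ g=f\circ h\). Applying \(F\) and using that \(F\) preserves composition gives
\[
F(f)\circ F(g)=F(f\circ g)=F(f\circ h)=F(f)\circ F(h),
\]
where \(F(g),F(h)\colon F(b)\to F(c)\) are parallel morphisms of \(\Dc\).

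Second, since \(F(f)\) is a monomorphism in \(\Dc\), we can cancel it on the left and obtain \(F(g)=F(h)\). Third, \(g\) and \(h\) both lie in \(\Hom_{\CC}(b,c)\). Faithfulness says exactly that the map \(F_{b,c}\colon\Hom_{\CC}(b,c)\to\Hom_{\Dc}(F(b),F(c))\) is injective, so \(g=h\). Hence \(f\) is monic.

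There is no real obstacle. The only point to check is that faithfulness is applied to a single hom-set: \(g\) and \(h\) are parallel, so they lie in the same \(\Hom_{\CC}(b,c)\). Injectivity on objects is not needed.

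Combined with the earlier observation that every morphism of \(\Sub_w\) is monic, this yields the intended corollary: any category admitting a weak embedding into some \(\Sub_w\) must be monic.
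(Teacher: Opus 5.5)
Your proof is correct and is the same argument the paper gives. You apply \(F\) to \(f\circ g=f\circ h\), cancel the monomorphism \(F(f)\) in \(\Dc\), and conclude \(g=h\) from faithfulness on the single hom-set \(\Hom_{\CC}(b,c)\).
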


\begin{proof}
Suppose \(f\circ g=f\circ h\). Applying \(F\), we get
\[
F(f)\circ F(g)=F(f)\circ F(h).
\]
Since \(F(f)\) is monic, \(F(g)=F(h)\). Since \(F\) is faithful, \(g=h\).
\end{proof}

\begin{corollary}
If \(\CC\) weakly embeds into \(\Sub_w\), then \(\CC\) is monic.
\end{corollary}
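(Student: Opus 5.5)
The corollary follows by combining two facts already in hand: every morphism of \(\Sub_w\) is monic, and faithful functors reflect monomorphisms (Lemma~\ref{lem:faithful-reflects-monic}).

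Let \(F\colon\CC\to\Sub_w\) be a weak embedding, and let \(f\colon c\to d\) be an arbitrary morphism of \(\CC\). By definition, a weak embedding is in particular a faithful functor.

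First we check that \(F(f)\) is monic. It is a subword inclusion, i.e. an injective order-preserving map \([m]\to[n]\). Composition in \(\Sub_w\) is composition of functions. So if \(F(f)\circ g=F(f)\circ h\) for subword inclusions \(g,h\colon[k]\to[m]\), then injectivity of \(F(f)\) gives \(g(i)=h(i)\) for every \(i\in[k]\), hence \(g=h\). This is the observation made just before Lemma~\ref{lem:faithful-reflects-monic}.

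Next, applying Lemma~\ref{lem:faithful-reflects-monic} to the faithful functor \(F\) and the monic morphism \(F(f)\), we conclude that \(f\) is monic in \(\CC\). Since \(f\) was arbitrary, every morphism of \(\CC\) is a monomorphism, so \(\CC\) is monic.

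No step is a genuine obstacle. The only point to be careful about is that monicity of \(F(f)\) must hold in \(\Sub_w\) itself, not merely in the category of sets. This is immediate, because any two parallel morphisms \(g,h\) in \(\Sub_w\) are in particular functions and composition is functional, so the set-level injectivity argument applies verbatim.
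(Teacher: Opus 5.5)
Your proof is correct and follows the paper's argument exactly: morphisms of \(\Sub_w\) are injective maps and hence monic, and Lemma~\ref{lem:faithful-reflects-monic} then transfers monicity back along the faithful functor \(F\). Your explicit check that injectivity gives monicity inside \(\Sub_w\) simply spells out what the paper states in one line.
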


\begin{proof}
All morphisms in \(\Sub_w\) are monic, and faithful functors reflect monomorphisms by Lemma~\ref{lem:faithful-reflects-monic}.
\end{proof}

The failure of monicity can be detected by a small category.

\begin{definition}
The \emph{fork category} \(\Fork\) is the category generated by the diagram
\[
\begin{tikzcd}
x \arrow[r, "f", bend left] \arrow[r, "g"', bend right]
&
y \arrow[r, "h"]
&
z
\end{tikzcd}
\]
with the relation
\[
h\circ f=h\circ g.
\]
\end{definition}

\begin{prop}[Characterization of monic acyclic categories]
Let \(\CC\) be an acyclic category. Then \(\CC\) is monic if and only if there is no weak embedding \(\Fork\to\CC\).
\end{prop}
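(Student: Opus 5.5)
We prove both directions by contraposition. First we record the structure of \(\Fork\). Its objects are \(x,y,z\). Its non-identity morphisms are \(f,g\colon x\to y\), \(h\colon y\to z\), and the single composite \(k:=h\circ f=h\circ g\colon x\to z\). It is acyclic, and the only parallel pair that is distinct but identified by a postcomposition is \(f\neq g\) with \(h\circ f=h\circ g\).

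Suppose first that there is a weak embedding \(F\colon\Fork\to\CC\). Since \(F\) is faithful and \(f\neq g\), we have \(F(f)\neq F(g)\). Functoriality gives
\[
F(h)\circ F(f)=F(h\circ f)=F(h\circ g)=F(h)\circ F(g).
\]
Hence \(F(h)\) is not a monomorphism, and \(\CC\) is not monic. Notice that this direction only uses faithfulness. It is the same mechanism as Lemma~\ref{lem:faithful-reflects-monic}, read in reverse: a faithful functor carries the witness of non-monicity along.

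Conversely, suppose \(\CC\) is not monic. Then there are morphisms \(h'\colon c\to d\) and \(f',g'\colon b\to c\) with \(f'\neq g'\) and \(h'\circ f'=h'\circ g'\). We define \(F\) by
\[
F(x)=b,\quad F(y)=c,\quad F(z)=d,\qquad F(f)=f',\quad F(g)=g',\quad F(h)=h',
\]
and \(F(k)=h'\circ f'\), with identities sent to identities. Because \(\Fork\) is presented by generators and the single relation \(h\circ f=h\circ g\), and this relation holds for the images, \(F\) is a well-defined functor. One can also check directly that every composite in \(\Fork\) is preserved.

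It remains to show that \(F\) is a weak embedding, and this is where acyclicity of \(\CC\) enters.

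\emph{Injectivity on objects.} If \(b=c\), then \(f',g'\in\Hom(c,c)=\{\id_c\}\), so \(f'=g'\), a contradiction. If \(c=d\), then \(h'=\id_c\), and \(h'\circ f'=h'\circ g'\) forces \(f'=g'\), again a contradiction. If \(b=d\), then \(f'\colon b\to c\) and \(h'\colon c\to b\) exist with \(b\neq c\), which contradicts acyclicity.

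\emph{Faithfulness.} Since the three objects are distinct and \(\Fork\) is acyclic, each hom-set of \(\Fork\) has at most one element, except \(\Hom(x,y)=\{f,g\}\). On that hom-set \(F\) is injective because \(f'\neq g'\). So \(F\) is faithful and is a weak embedding.

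The only step needing any care is the object-injectivity check, and it follows from the case analysis above. I therefore expect no real obstacle; the main point is to state explicitly that the presentation of \(\Fork\) makes \(F\) well defined.
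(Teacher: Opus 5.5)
Your proof is correct and follows the same route as the paper: the image of \(h\) witnesses non-monicity in one direction, and in the other you send the generators of \(\Fork\) to the witnessing triple \(f',g',h'\). Beyond what the paper writes, you verify that the functor is well defined, give the case analysis showing via acyclicity that the three objects are distinct, and reduce faithfulness to the single hom-set \(\Hom(x,y)\); the paper only asserts these points.
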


\begin{proof}
If such a functor exists, then the image of \(h\) is not monic, so \(\CC\) is not monic.

Conversely, suppose \(\CC\) is not monic. Then there exist a morphism \(h\colon y\to z\) and two distinct morphisms \(f,g\colon x\to y\) such that
\[
h\circ f=h\circ g.
\]
Since \(\CC\) is acyclic, the objects \(x,y,z\) are distinct. Sending the generators of \(\Fork\) to \(f,g,h\) defines a faithful functor
\(\Fork\to\CC\) which is injective on objects.
\end{proof}

\subsection{Left locally totally ordered categories}
\begin{definition}
A small category \(\CC\) is \emph{left locally totally ordered} if, for each pair of objects \(c,d\), the hom-set \(\Hom(c,d)\) is equipped with a total order such that postcomposition strictly preserves the order. That is, for every morphism \(g\colon d\to e\), the map
\[
g\circ -\colon \Hom(c,d)\longrightarrow \Hom(c,e)
\]
is strictly order-preserving:
\[
f_1<f_2 \quad\Longrightarrow\quad g\circ f_1<g\circ f_2.
\]
\end{definition}

\begin{example}\label{ex:P}
Let \(\mathcal{P}\) be the category with three objects \(x,y,z\), arrows
\[
\begin{tikzcd}
x \arrow[r, "f_1", bend left] \arrow[r, "f_2"', bend right]
&
y \arrow[r, "g_1", bend left] \arrow[r, "g_2"', bend right]
&
z
\end{tikzcd}
\]
and relations
\[
g_1\circ f_1=g_2\circ f_2,
\qquad
g_2\circ f_1=g_1\circ f_2.
\]
This category cannot be left locally totally ordered.

Indeed, suppose first that \(f_1<f_2\) in \(\Hom(x,y)\). By postcomposing with \(g_1\) and \(g_2\), we get
\[
g_1\circ f_1<g_1\circ f_2,
\qquad
g_2\circ f_1<g_2\circ f_2.
\]
Using the defining relations, these inequalities become
\[
g_2\circ f_2<g_1\circ f_2,
\qquad
g_1\circ f_2<g_2\circ f_2,
\]
a contradiction. The case \(f_2<f_1\) is analogous.
\end{example}

\subsection{Main theorem}

We now prove the necessary and sufficient conditions for a finite acyclic category to be weakly embeddable into a category of words.

\begin{theorem}\label{thm:embed}
Let \(\CC\) be a finite acyclic category. There exists an alphabet \(\Sigma\) and a weak embedding
\[
F\colon \CC\longrightarrow \Word(\Sigma)
\]
if and only if \(\CC\) is monic and left locally totally ordered.
\end{theorem}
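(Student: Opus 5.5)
The plan is to prove the two directions separately. Necessity comes from transporting structure back along a faithful functor. Sufficiency comes from a Yoneda-type construction: each object is represented by the word listing all morphisms into it.

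\emph{Necessity.} Suppose \(F\colon\CC\to\Word(\Sigma)\) is a weak embedding. Monicity follows from the Corollary after Lemma~\ref{lem:faithful-reflects-monic}, since every subword inclusion is monic. The same lemma applies verbatim with \(\Word(\Sigma)\) in place of \(\Sub_w\).

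For the left local total order, I first order each hom-set of \(\Word(\Sigma)\). An inclusion \(f\colon[m]\to[n]\) is written as the tuple \((f(1),\dots,f(m))\), and I order such tuples lexicographically. Postcomposition with an order-preserving injection \(g\) is strictly monotone for this order. Indeed, if \(f_1\) and \(f_2\) first differ at index \(i\) with \(f_1(i)<f_2(i)\), then \(g f_1\) and \(g f_2\) agree before \(i\) and satisfy \(g f_1(i)<g f_2(i)\).

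Next I pull this order back to \(\CC\): declare \(f_1<f_2\) in \(\Hom_\CC(c,d)\) iff \(F(f_1)<F(f_2)\). Faithfulness of \(F\) makes this a total order on each hom-set. Strict compatibility with postcomposition follows from functoriality, since \(F(g\circ f_i)=F(g)\circ F(f_i)\).

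\emph{Sufficiency.} Assume \(\CC\) is monic and left locally totally ordered. Take the alphabet \(\Sigma=\Ob(\CC)\), and fix a linear extension \(\preceq\) of \(\CC\) using Lemma~\ref{lem:linear-extension}. For an object \(d\), let the positions of the word \(F(d)\) be the set of all morphisms \(h\colon b\to d\) with arbitrary domain \(b\). The label of the position \(h\) is its domain \(b\). These positions are ordered first by the domain according to \(\preceq\), and then, within each block of fixed domain \(b\), by the given total order on \(\Hom(b,d)\).

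For a morphism \(f\colon c\to d\), define \(F(f)\) to be the map of positions \(h\mapsto f\circ h\). I then check the following points in turn.
\begin{enumerate}
\item \(F(f)\) preserves labels, because \(f\circ h\) has the same domain as \(h\).
\item \(F(f)\) is injective because \(f\) is monic.
\item \(F(f)\) is strictly order-preserving. Between blocks this holds because domains are unchanged. Within a block it holds by strict compatibility of the order with postcomposition.
\item \(F\) is a functor, by associativity of composition and the unit laws.
\item \(F\) is faithful. If \(f_1\neq f_2\), then the position \(\id_c\) of \(F(c)\) is sent to the distinct positions \(f_1\) and \(f_2\) of \(F(d)\).
\item \(F\) is injective on objects. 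The word \(F(c)\) contains the letter \(c\), coming from the position \(\id_c\). If \(F(c)=F(d)\) with \(c\neq d\), then the letter \(c\) must occur in \(F(d)\), so \(\Hom(c,d)\neq\varnothing\). Symmetrically \(\Hom(d,c)\neq\varnothing\), which contradicts acyclicity.
\end{enumerate}
Since \(\CC\) is finite, each \(F(d)\) is a finite word. All the \(F(d)\) are subwords of a suitable concatenation of the words \(F(d)\), but here only a functor into \(\Word(\Sigma)\) is required.

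\emph{Main obstacle.} The delicate step is item 3, strict monotonicity of \(F(f)\). This is exactly where the left-compatible order is essential: without it, \(h\mapsto f\circ h\) could reverse the order inside a block, as Example~\ref{ex:P} shows must happen in general. Ordering the positions first by domain, via the linear extension, reduces the problem to comparisons within a single hom-set, where the hypothesis applies directly.
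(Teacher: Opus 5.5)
Your proposal is correct and follows the paper's proof essentially step for step. For necessity you pull the lexicographic order on subword inclusions back along the faithful functor, where the paper routes this through the forgetful functor to \(\FinInj_{\mathrm{ord}}\). For sufficiency, your word \(F(d)\) with positions indexed by morphisms \(h\colon b\to d\) is exactly the paper's \(w_d=c_1^{|\Hom(c_1,d)|}\cdots c_N^{|\Hom(c_N,d)|}\), your map \(h\mapsto f\circ h\) is its blockwise maps \(\phi_i\), and the injectivity-on-objects and faithfulness arguments (via the letter \(c\) and the position \(\id_c\)) match as well.
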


\begin{proof}
\((\Rightarrow)\)
Assume that there exists an alphabet \(\Sigma\) and a weak embedding
\[
F\colon \CC\longrightarrow \Word(\Sigma).
\]
Let
\[
U_\Sigma\colon \Word(\Sigma)\longrightarrow \FinInj_{\mathrm{ord}}
\]
be the forgetful functor that sends a word \(w\colon[n]\to\Sigma\) to the underlying finite ordered set \([n]\), and a subword inclusion to its underlying strictly increasing injection. The functor \(U_\Sigma\) is
faithful. Hence the composite
\[
G=U_\Sigma\circ F\colon \CC\longrightarrow \FinInj_{\mathrm{ord}}
\]
is faithful.

In \(\FinInj_{\mathrm{ord}}\), every morphism is an injective function, hence a monomorphism. Since faithful functors reflect monomorphisms, every morphism of \(\CC\) is monic.

We now construct the local orders. Equip each hom-set of \(\FinInj_{\mathrm{ord}}\) with the lexicographic order: for two distinct strictly increasing injections \(\alpha,\beta\colon[m]\to[n]\), let \(i\) be the first index such that \(\alpha(i)\neq\beta(i)\), and set
\[
\alpha<\beta
\quad\Longleftrightarrow\quad
\alpha(i)<\beta(i).
\]
This order is total. Moreover, it is preserved by postcomposition: if \(h\colon[n]\to[k]\) is strictly increasing and \(\alpha<\beta\), then
\[
h\circ\alpha<h\circ\beta,
\]
because the first index where the two maps differ is still \(i\), and
\(h(\alpha(i))<h(\beta(i))\).

Pull this order back along \(G\). For \(f,g\in\Hom_{\CC}(x,y)\), define
\[
f<g
\quad\Longleftrightarrow\quad
G(f)<G(g).
\]
Since \(G\) is faithful, this gives a total order on \(\Hom_{\CC}(x,y)\). The postcomposition compatibility follows from the corresponding property in \(\FinInj_{\mathrm{ord}}\). Hence \(\CC\) is left locally totally ordered.

\((\Leftarrow)\)
Assume that \(\CC\) is monic, acyclic, and left locally totally ordered. Choose a linear extension
\[
c_1<c_2<\cdots<c_N
\]
of the objects of \(\CC\), using Lemma~\ref{lem:linear-extension}. For each object \(c\), define a word \(w_c\) over the alphabet
\[
\Sigma=\{c_1,\dots,c_N\}
\]
by
\[
w_c
=
c_1^{|\Hom(c_1,c)|}
c_2^{|\Hom(c_2,c)|}
\cdots
c_N^{|\Hom(c_N,c)|},
\]
with the convention that \(x^0\) is the empty word.

We first show that \(c\mapsto w_c\) is injective on objects. Let \(c\neq d\). If
\[
\Hom(c,d)=\varnothing=\Hom(d,c),
\]
then the letter \(c\) appears in \(w_c\) but not in \(w_d\), while \(d\) appears in \(w_d\) but not in \(w_c\). Hence \(w_c\neq w_d\). Otherwise, without loss of generality, suppose \(\Hom(c,d)\neq\varnothing\). By acyclicity, \(\Hom(d,c)=\varnothing\). Then the letter \(d\) appears in \(w_d\) but not in \(w_c\), so again \(w_c\neq w_d\).

Now let \(f\colon c\to d\) be a morphism. For each object \(c_i\), write the elements of \(\Hom(c_i,c)\) in increasing order:
\[h_{i,1}<h_{i,2}<\cdots<h_{i,m_i},\qquad m_i=|\Hom(c_i,c)|.\]
Similarly, write
\[k_{i,1}<k_{i,2}<\cdots<k_{i,m_i'}, \qquad m_i'=|\Hom(c_i,d)|.\]
Postcomposition with \(f\) gives a map
\[f_*\colon \Hom(c_i,c)\longrightarrow \Hom(c_i,d),
\qquad h\longmapsto f\circ h. \]
Since \(f\) is monic, \(f_*\) is injective. Since \(\CC\) is left locally totally ordered, \(f_*\) is strictly order-preserving. Therefore there is a strictly increasing map
\[
\phi_i\colon \{1,\dots,m_i\}\longrightarrow \{1,\dots,m_i'\}
\]
such that
\[
f\circ h_{i,p}=k_{i,\phi_i(p)}
\]
for all \(p\).

Define \(F(f)\colon w_c\to w_d\) by sending the \(p\)-th occurrence of the letter \(c_i\) in \(w_c\) to the \(\phi_i(p)\)-th occurrence of \(c_i\) in \(w_d\). Since each \(\phi_i\) is strictly increasing, this is an injective order-preserving map respecting labels. Hence \(F(f)\) is a morphism in \(\Word(\Sigma)\).

For identities, the maps \(\phi_i\) are identities, so
\[
F(\id_c)=\id_{w_c}.
\]
If \(f\colon c\to d\) and \(g\colon d\to e\), then
\[
(g\circ f)_*=g_*\circ f_*,
\]
and hence
\[
F(g\circ f)=F(g)\circ F(f).
\]
Thus \(F\) is a functor.

Finally, \(F\) is faithful. Let \(f,g\colon c\to d\) be distinct morphisms. Consider the block corresponding to \(c_i=c\). Since \(\CC\) is acyclic,
\[\Hom(c,c)=\{\id_c\}.\]
The unique occurrence of the letter \(c\) in \(w_c\) is sent by \(F(f)\) to the position of \(f=f\circ\id_c\) in the \(c\)-block of \(w_d\), and by \(F(g)\) to the position of \(g=g\circ\id_c\). Since \(f\neq g\), these positions are distinct. Hence \(F(f)\neq F(g)\).

Thus \(F\) is a weak embedding.
\end{proof}

\begin{corollary}\label{cor:subword-embedding}
Let \(\CC\) be a finite acyclic category. Then
\[
\operatorname{wwdim}(\CC)<\infty
\]
if and only if \(\CC\) is monic and left locally totally ordered.
\end{corollary}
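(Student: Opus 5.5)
The corollary should follow from Theorem~\ref{thm:embed} once we pass between the two possible targets, \(\Word(\Sigma)\) and \(\Sub_w\). The main work is showing that a weak embedding into \(\Word(\Sigma)\) can be corestricted to a weak embedding into \(\Sub_w\) for a single finite word \(w\). The converse passage is just composition with the inclusion.

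For the direction \((\Rightarrow)\), suppose \(\operatorname{wwdim}(\CC)<\infty\). Then there is a weak embedding \(\CC\to\Sub_w\) for some finite word \(w\). Since \(\Sub_w\) is a full subcategory of \(\Word(\Sigma)\), composing with the inclusion functor gives a faithful functor that is still injective on objects. This composite is therefore a weak embedding \(\CC\to\Word(\Sigma)\), and the forward implication of Theorem~\ref{thm:embed} shows that \(\CC\) is monic and left locally totally ordered.

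For \((\Leftarrow)\), the backward implication of Theorem~\ref{thm:embed} gives an explicit weak embedding \(F\colon\CC\to\Word(\Sigma)\) with \(F(c)=w_c\). The words \(w_c\) are finite because \(\CC\) is finite. To corestrict \(F\), I need one word \(w\) into which every \(w_c\) embeds. I will take a concatenation of all the \(w_c\), for instance
\[
w=w_{c_1}w_{c_2}\cdots w_{c_N}.
\]
Each \(w_c\) is a contiguous factor of \(w\), hence admits a subword inclusion into \(w\), so every \(w_c\) is an object of \(\Sub_w\). Because \(\Sub_w\) is full in \(\Word(\Sigma)\), every morphism \(F(f)\colon w_c\to w_d\) already lies in \(\Sub_w\). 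Thus \(F\) factors through \(\Sub_w\), and the factored functor remains faithful and injective on objects.

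The length of \(w\) is finite, namely \(\sum_{c}\sum_i|\Hom(c_i,c)|\), so \(\operatorname{wwdim}(\CC)\) is at most this number. The only point needing care is this corestriction step, and it is easy because \(\Sub_w\) is full. A sharper choice of \(w\), such as \(c_1^{M_1}\cdots c_N^{M_N}\) with \(M_i=\max_c|\Hom(c_i,c)|\), would give a better bound, but it is not needed for finiteness.
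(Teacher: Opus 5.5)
Your proposal is correct and follows the paper's own argument. For \((\Rightarrow)\) you compose with the full inclusion \(\Sub_w\hookrightarrow\Word(\Sigma)\) and apply Theorem~\ref{thm:embed}; for \((\Leftarrow)\) you corestrict the theorem's functor to \(\Sub_w\), with \(w\) the concatenation of the finitely many words \(w_c\), and your side remark about the shorter word \(c_1^{M_1}\cdots c_N^{M_N}\) is also valid.
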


\begin{proof}
If \(\CC\) weakly embeds into \(\Sub_w\), then it weakly embeds into
\(\Word(\Sigma)\), so Theorem~\ref{thm:embed} applies.

Conversely, the construction in Theorem~\ref{thm:embed} assigns to each object \(c\) a word \(w_c\). Since \(\CC\) is finite, there are only finitely many such words. Let \(w\) be their concatenation, in any order. Then each \(w_c\) is a subword of \(w\), and therefore the functor constructed in Theorem~\ref{thm:embed} lands in \(\Sub_w\). Hence \(\CC\) has finite weak word dimension.
\end{proof}

\begin{remark}
The construction in the proof gives a weak representation. It is not, in general, a strong representation. Indeed, the representing words may admit additional subword inclusions that do not come from morphisms of \(\CC\). This is exactly the categorical analogue of the difference between an injective order-preserving map of posets and an order embedding.
\end{remark}

\begin{remark}
The particular embedding constructed above has the following useful property: if \(c\) and \(d\) are objects with
\[
\Hom(c,d)=\varnothing=\Hom(d,c),
\]
then
\[
\Hom(w_c,w_d)=\varnothing=\Hom(w_d,w_c).
\]
Thus the construction does not create comparabilities between objects that were incomparable in the underlying object-poset. However, it may still create extra parallel morphisms between comparable objects, and therefore it is not usually fully faithful.
\end{remark}

\begin{example}
Consider the free category \(P'\) on the diagram
\[
\begin{tikzcd}
x \arrow[r, "f_1", bend left] \arrow[r, "f_2"', bend right]
&
y \arrow[r, "g_1", bend left] \arrow[r, "g_2"', bend right]
&
z
\end{tikzcd}
\]
with the four composites \(h_{ij}=g_j\circ f_i\) all distinct.
The category \(P'\) is monic and acyclic. Choose
\[
f_1<f_2,
\qquad
g_1<g_2,
\]
and order \(\Hom(x,z)\) by
\[
h_{11}<h_{21}<h_{12}<h_{22}.
\]
This order is compatible with postcomposition by \(g_1\) and \(g_2\).

The construction of Theorem~\ref{thm:embed} gives
\[
w_x=x,\qquad w_y=xxy,\qquad w_z=xxxxyyz
\]
over the alphabet \(\{x,y,z\}\). The morphisms \(f_1\) and \(f_2\) are represented by the two inclusions of \(x\) into \(xx\). With the order chosen above, the morphisms \(g_1\) and \(g_2\) are represented by
\[
g_1=(1,3,5),
\qquad
g_2=(2,4,6).
\]
This weak representation is not full: the word \(xxy\) has more than two subword inclusions into \(xxxxyyz\).
\end{example}

\subsection{Free categories}

We now show that path categories of finite acyclic quivers are word-embeddable. We recall the basic terminology; see \cite{CatWorking} or \cite{Polygraph} for related material and connections with other combinatorial structures in category theory and higher rewriting systems.

A morphism \(f\) in a category is \emph{atomic} if it is not an identity and, whenever \(f=h\circ g\), either \(g\) or \(h\) is an identity. A category is \emph{free} if every morphism can be written uniquely as a composite of atomic morphisms; equivalently, it is the path category of a directed graph.

Every small free category is generated by a quiver \(Q=(Q_0,Q_1,s,t)\). The path category \(\operatorname{Path}(Q)\) has objects \(Q_0\), and a morphism from \(v\) to \(w\) is a finite directed path
\[
e_1e_2\cdots e_n
\]
with \(s(e_1)=v\), \(t(e_i)=s(e_{i+1})\), and \(t(e_n)=w\). The empty path at \(v\) gives the identity \(\id_v\). Composition is concatenation of paths.

\begin{prop}
Let \(Q\) be a finite acyclic quiver and let
\[
\CC=\operatorname{Path}(Q).
\]
Then \(\CC\) is monic and left locally totally ordered. Consequently, \(\CC\) is weakly word-embeddable.
\end{prop}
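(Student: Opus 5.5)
We first check that \(\CC=\operatorname{Path}(Q)\) is acyclic, then verify monicity and the left local total order directly on paths, and finally invoke Corollary~\ref{cor:subword-embedding}. Acyclicity of \(\CC\) follows from acyclicity of \(Q\). A nonidentity endomorphism would be a nonempty closed directed path, which \(Q\) forbids. Morphisms \(v\to w\) and \(w\to v\) with \(v\neq w\) would concatenate to such a closed path. Finiteness of \(\CC\) also follows: since \(Q\) is finite and acyclic, every directed path has length at most \(|Q_0|-1\), so each hom-set is finite.

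For monicity, write morphisms as paths and composition as concatenation: \(f\circ g\) is the path \(g\) followed by \(f\). Suppose \(f\circ g=f\circ h\) with \(g,h\colon b\to c\) and \(f\colon c\to d\). Then the paths \(g f\) and \(h f\) agree as edge sequences. Stripping the common suffix \(f\) gives \(g=h\) as sequences of edges. In other words, concatenation is right-cancellative, and this is just cancellation in a free monoid restricted to composable sequences.

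For the local order, fix an arbitrary total order on the finite edge set \(Q_1\). On each \(\Hom(c,d)\) we need an order that postcomposition preserves. Postcomposition appends edges at the end, so we compare paths starting from the beginning. For distinct paths \(p\neq q\colon c\to d\), neither is a proper prefix of the other: if \(p\) were a proper prefix of \(q\), the remaining part of \(q\) would be a nonempty closed path at \(d\), which is impossible. Hence there is a first index \(i\) with \(p_i\neq q_i\), and we declare \(p<q\) iff \(p_i<q_i\) in the order on \(Q_1\). This is a total order on \(\Hom(c,d)\).

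For compatibility, let \(g\colon d\to e\) be a path. The composites \(g\circ p\) and \(g\circ q\) are the paths \(p g\) and \(q g\). They still first differ at index \(i\), with the same edges \(p_i,q_i\) there, so \(p<q\) implies \(g\circ p<g\circ q\). This is the same argument as the lexicographic order on \(\FinInj_{\mathrm{ord}}\) used in Theorem~\ref{thm:embed}.

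The only delicate point in the whole argument is the prefix issue in the lexicographic comparison, which acyclicity resolves. Having established that \(\CC\) is finite, acyclic, monic and left locally totally ordered, Corollary~\ref{cor:subword-embedding} gives \(\operatorname{wwdim}(\CC)<\infty\).
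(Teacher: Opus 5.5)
Your proof is correct and follows the paper's argument. Monicity comes from right-cancellation of path concatenation, and the order is the left-lexicographic order on paths (you use a global order on \(Q_1\), the paper a per-source order, which amounts to the same thing), preserved because postcomposition appends a common suffix. Your explicit checks that \(\operatorname{Path}(Q)\) is finite and acyclic, and your observation that acyclicity prevents one parallel path from being a proper prefix of another, fill in details the paper leaves implicit.
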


\begin{proof}
Every morphism in \(\CC\) is a monomorphism because path concatenation is cancellative on the appropriate side: if
\[
f\circ g=f\circ h,
\]
then \(g=h\) by uniqueness of path decomposition.

To build the left local total order, fix a topological ordering of the vertices
\[
v_1<v_2<\cdots<v_n,
\]
(this is possible because \(Q\) is acyclic). For each vertex \(v\), choose an arbitrary total order on the edges with source \(v\). For paths with common source and target, order them lexicographically from the left: if
\[
p=e_1p',
\qquad
q=e_2q',
\]
and \(e_1\neq e_2\), compare \(e_1\) and \(e_2\) using the chosen order on edges with that source; if \(e_1=e_2\), compare \(p'\) and \(q'\) recursively. This gives a total order on every hom-set.

We write paths in the order in which they are traversed. Thus, if \(p\colon x\to y\) and \(h\colon y\to z\), then \(h\circ p\) is obtained from \(p\) by appending the path \(h\) on the right. Therefore \(h\circ p\) and \(h\circ q\) have the same first difference as \(p\) and \(q\): postcomposition appends the same suffix to both paths. Hence the lexicographic order from the left is preserved under postcomposition. This proves that \(\CC\) is left locally totally ordered.
\end{proof}

\subsection{The poset case}

We briefly spell out what the construction of Theorem~\ref{thm:embed} gives when the category is a poset and how it relates with previous constructions. First, we can apply the theorem. Every poset is monic since there are no parallel arrows and it is also left locally totally ordered since every hom-set has at most one morphism.

Recall that every finite poset embeds into a Boolean lattice by means of principal downsets \cite{Trotter_Embedding}:
\[
P\longrightarrow 2^P,
\qquad
p\longmapsto \downarrow p=\{q\in P\mid q\leq p\}.
\]
This map is injective and order-preserving. In fact, it is also order reflecting: if
\[
\downarrow p\subseteq \downarrow q,
\]
then using that \(p\in\downarrow p\) we obtain that \(p\in\downarrow q\), and therefore \(p\leq q\). Thus the principal downset map embeds \(P\) as a subposet of \(2^P\). This give us a way to embed a poset into the subwords of a word by identifying \(2^P\) with the category of subwords of the word given by concatening all the objects of the poset.

We will see that the principal downsets construction is the same as the one that appears in the proof of Theorem~\ref{thm:embed}.

\begin{lemma}\label{lem:poset-downset-construction}
Let \(P\) be a finite poset, regarded as a category. Then the construction of Theorem~\ref{thm:embed} sends each object \(p\in P\) to the word
\[
w_p=p_1^{|\Hom(p_1,p)|}p_2^{|\Hom(p_2,p)|}\cdots
p_N^{|\Hom(p_N,p)|},
\]
which is precisely the subword of \(w=p_1p_2\cdots p_N\) corresponding to the principal downset \(\downarrow p\).
\end{lemma}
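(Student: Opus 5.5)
The plan is to compute each exponent \(|\Hom(p_i,p)|\) directly from the fact that a poset, viewed as a category, has at most one morphism between any two objects. I would fix the linear extension \(p_1<\cdots<p_N\) used in the construction of Theorem~\ref{thm:embed}. For each \(i\), the set \(\Hom(p_i,p)\) is a singleton if \(p_i\le p\) and empty otherwise, so
\[
|\Hom(p_i,p)|=\begin{cases}1,& p_i\in\downarrow p,\\ 0,& p_i\notin\downarrow p.\end{cases}
\]
With the convention \(x^0=\varnothing\), \(w_p\) is therefore the word obtained from \(w=p_1p_2\cdots p_N\) by deleting exactly the letters \(p_i\) with \(p_i\notin\downarrow p\).

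Next I would make precise what ``the subword of \(w\) corresponding to \(\downarrow p\)'' means. The word \(w\) is injective as a function \([N]\to\Sigma\), since its letters are the distinct objects. The isomorphism \(\Sub_w\cong 2^{[N]}\) from the earlier proposition then sends a subword \(u\) to its set of letters, viewed as a subset of \(P\). Its inverse sends a subset \(S\subseteq P\) to the unique subword of \(w\) whose letters are exactly \(S\), listed in the order they occur in \(w\). Applying this to \(S=\downarrow p\) gives exactly the word described above, namely the letters of \(\downarrow p\) in the order \(p_1<\cdots<p_N\). So the two words coincide.

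Finally, I would check that morphisms match as well, so that the identification is one of functors and not just of objects. For \(p\le q\), the map \(F(p\le q)\) built in Theorem~\ref{thm:embed} sends the unique occurrence of the letter \(p_i\) in \(w_p\) to the unique occurrence of \(p_i\) in \(w_q\). Each \(\phi_i\) is the map \(\{1\}\to\{1\}\), since both hom-sets involved are singletons. This is precisely the inclusion \(\downarrow p\subseteq\downarrow q\) under the Boolean identification. No step is a real obstacle; the only point needing care is stating explicitly that the correspondence with \(2^P\) is the one induced by the injective word \(w\), so that a subset determines a unique subword.
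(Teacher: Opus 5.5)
Your proposal is correct and follows the paper's proof. Both compute \(|\Hom(p_i,p)|\in\{0,1\}\) according to whether \(p_i\le p\), so \(w_p\) retains exactly the letters of \(\downarrow p\) in the order of \(w\); your extra check that \(F(p\le q)\) corresponds to the inclusion \(\downarrow p\subseteq\downarrow q\) goes beyond what the lemma asserts, but it is also correct.
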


\begin{proof}
When \(P\) is regarded as a category, there is exactly one morphism
\(p_i\to p\) if \(p_i\leq p\), and no morphism otherwise. Hence
\[
|\Hom(p_i,p)|
=
\begin{cases}
1, & p_i\leq p,\\
0, & p_i\nleq p.
\end{cases}
\]
Therefore the word constructed in Theorem~\ref{thm:embed} is
\[
w_p=p_1^{|\Hom(p_1,p)|} p_2^{|\Hom(p_2,p)|} \cdots p_N^{|\Hom(p_N,p)|},
\]
which contains the letter \(p_i\) exactly when \(p_i\leq p\). Thus \(w_p\) is exactly the subword of \(w\) determined by the subset
\[
\{p_i\in P\mid p_i\leq p\}=\downarrow p.
\]
This is precisely the principal downset representation.
\end{proof}

\begin{remark}
In the poset case, the construction is stronger than merely weak. Since the word \(w=p_1\cdots p_N\) has no repeated letters, \(\Sub_w\) is a poset, and for \(p,q\in P\) we have
\[
w_p\leq w_q
\quad\Longleftrightarrow\quad
\downarrow p\subseteq \downarrow q
\quad\Longleftrightarrow\quad
p\leq q.
\]
Thus the principal downset construction gives a strong embedding of \(P\) into \(\Sub_w\cong 2^P\)..
\end{remark}

\subsection{Stability under categorical constructions}

The class of categories that embed weakly into a word, equivalently finite acyclic monic left locally totally ordered categories, is closed under several natural operations.

\subsubsection{Slice and coslice categories}

Let \(\CC\) satisfy the embedding criterion and let \(c\in\CC\). The slice category \(\CC/c\) has as objects pairs \((x,f)\), where \(f\colon x\to c\).
A morphism
\[
h\colon (x,f)\to(y,g)
\]
is a morphism \(h\colon x\to y\) in \(\CC\) such that
\[
g\circ h=f.
\]
Similarly we can define the \emph{coslice category} \(c\backslash \CC\), also denoted \(c\downarrow\CC\), such that it has as objects pairs \((x,f)\), where \(f\colon c\to x\) is a morphism of \(\CC\). A morphism
\[
h\colon (x,f)\to (y,g)
\]
is a morphism \(h\colon x\to y\) in \(\CC\) such that
\[
h\circ f=g.
\]
\begin{prop}
If \(\CC\) is finite, acyclic, monic, and left locally totally ordered, then so is \(\CC/c\) and \(c\backslash \CC\). Hence \(\CC/c\) and \(c\backslash \CC\) are weakly word-embeddable.
\end{prop}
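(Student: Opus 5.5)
We verify the four properties (finite, acyclic, monic, left locally totally ordered) directly for \(\CC/c\) and \(c\backslash\CC\). The conclusion then follows from Corollary~\ref{cor:subword-embedding}. The key observation is that in both constructions the forgetful functor \(U\colon \CC/c\to\CC\), \((x,f)\mapsto x\), and likewise \(c\backslash\CC\to\CC\), is faithful. Moreover, each hom-set \(\Hom((x,f),(y,g))\) is a subset of \(\Hom_{\CC}(x,y)\), and composition is inherited from \(\CC\).

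Finiteness is immediate, since objects are pairs drawn from finitely many objects and morphisms of \(\CC\).

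For acyclicity, first take a morphism \(h\colon(x,f)\to(y,g)\) together with a morphism \(k\colon(y,g)\to(x,f)\). Then \(h\colon x\to y\) and \(k\colon y\to x\) in \(\CC\), so acyclicity of \(\CC\) forces \(x=y\) and \(h=k=\id_x\). The commutation condition then reads \(g=g\circ\id=f\) in the slice case, or \(g=\id\circ f=f\) in the coslice case, so the two objects coincide. Endomorphisms of \((x,f)\) are endomorphisms of \(x\), hence identities.

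For monicity, apply Lemma~\ref{lem:faithful-reflects-monic} to the faithful functor \(U\). Every \(U(h)\) is monic in \(\CC\), so every \(h\) is monic in \(\CC/c\), and likewise in \(c\backslash\CC\).

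For the left local total order, restrict the given total order on \(\Hom_{\CC}(x,y)\) to the subset \(\Hom((x,f),(y,g))\). Restriction of a total order is a total order. Postcomposition in the slice or coslice category is computed in \(\CC\), and it maps the relevant subset into the relevant subset. Strict monotonicity is therefore inherited from \(\CC\).

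There is no real obstacle here. The only points needing a line of care are in the acyclicity argument, namely that \(x=y\) forces \(f=g\) in both variants, and in checking that postcomposition stays within the sub-hom-sets. For the latter: if \(h\colon(x,f)\to(y,g)\) and \(k\colon(y,g)\to(z,e)\), then \(e\circ k\circ h=g\circ h=f\) in the slice case, and \(k\circ h\circ f=k\circ g=e\) in the coslice case.
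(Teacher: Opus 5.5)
Your proof is correct and follows essentially the same route as the paper: verify finiteness, acyclicity, monicity and the restricted local order by viewing each hom-set of \(\CC/c\) or \(c\backslash\CC\) as a subset of a hom-set of \(\CC\), then invoke Corollary~\ref{cor:subword-embedding}. Using Lemma~\ref{lem:faithful-reflects-monic} for monicity just repackages the paper's direct cancellation argument. Your acyclicity check, that \(x=y\) and \(h=\id_x\) force \(f=g\), supplies a detail the paper's ``a cycle projects to a cycle'' leaves implicit.
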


\begin{proof}
Finiteness is clear. Acyclicity follows because any cycle in \(\CC/c\) projects
to a cycle in \(\CC\).

For monicity, suppose
\[
h\colon (x,f)\to(y,g)
\]
and two morphisms into \((x,f)\) become equal after postcomposition with \(h\). The equality holds in \(\CC\), and since \(h\) is monic in \(\CC\), the two morphisms are equal.

For the local order, observe that
\[
\Hom_{\CC/c}((x,f),(y,g))
=
\{h\colon x\to y\mid g\circ h=f\}
\]
is a subset of \(\Hom_{\CC}(x,y)\). Give it the restricted order. If
\(h_1<h_2\) and
\[
m\colon (y,g)\to(z,p)
\]
is a morphism in \(\CC/c\), then \(m\colon y\to z\) and \(p\circ m=g\). Since
\(\CC\) is left locally totally ordered,
\[ m\circ h_1<m\circ h_2. \]
Moreover,
\[
p\circ(m\circ h_i)=(p\circ m)\circ h_i=g\circ h_i=f,
\]
so \(m\circ h_i\) lies in the corresponding hom-set of \(\CC/c\). Thus \(\CC/c\) is left locally totally ordered. 

The proof for the case of \(c\backslash \CC\) is almost the same. 
\end{proof}

\subsubsection{Functor categories with acyclic domain}

Let \(I\) be a finite acyclic category and let \(\CC\) satisfy the embedding criterion. The functor category \([I,\CC]\) has as objects functors \(F\colon I\to\CC\) and as morphisms natural transformations.

\begin{prop}
If \(I\) is finite acyclic and \(\CC\) is finite, acyclic, monic, and left locally totally ordered, then \([I,\CC]\) has the same properties. Hence \([I,\CC]\) is weakly word-embeddable.
\end{prop}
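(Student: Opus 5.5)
We check the four properties of \([I,\CC]\) one at a time and then invoke Corollary~\ref{cor:subword-embedding}.

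\emph{Finiteness.} There are finitely many assignments of objects and morphisms of \(I\) to those of \(\CC\). Hence there are finitely many functors \(I\to\CC\), and finitely many families of components, so finitely many natural transformations.

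\emph{Acyclicity.} Let \(\alpha\colon F\to G\) and \(\beta\colon G\to F\). For every \(i\in I\), the components \(\alpha_i\colon F(i)\to G(i)\) and \(\beta_i\colon G(i)\to F(i)\) go back and forth in \(\CC\). Acyclicity of \(\CC\) forces \(F(i)=G(i)\) and \(\alpha_i=\beta_i=\id\). Thus \(F=G\) on objects, and then on morphisms as well, because naturality with identity components gives \(G(u)=F(u)\). Likewise every endomorphism \(F\to F\) has all components identities, so it is \(\id_F\).

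\emph{Monicity.} Suppose \(\alpha\circ\beta=\alpha\circ\gamma\) in \([I,\CC]\). Comparing components gives \(\alpha_i\circ\beta_i=\alpha_i\circ\gamma_i\) for each \(i\). Since \(\alpha_i\) is monic in \(\CC\), we get \(\beta_i=\gamma_i\) for all \(i\), hence \(\beta=\gamma\).

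\emph{Left local total order.} This is the main point. Fix a linear extension \(i_1<\dots<i_k\) of \(\Ob(I)\) by Lemma~\ref{lem:linear-extension}; any fixed enumeration of the objects would also work. Given distinct \(\alpha,\beta\colon F\to G\), let \(j\) be the first index with \(\alpha_{i_j}\neq\beta_{i_j}\). Declare \(\alpha<\beta\) if and only if \(\alpha_{i_j}<\beta_{i_j}\) in the given order on \(\Hom_\CC(F(i_j),G(i_j))\). This is a lexicographic order on the sequence of components, and it is total because a natural transformation is determined by its components.

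Now let \(\gamma\colon G\to H\) be a natural transformation. The components of \(\gamma\circ\alpha\) are \(\gamma_i\circ\alpha_i\), and those of \(\gamma\circ\beta\) are \(\gamma_i\circ\beta_i\). For \(i=i_m\) with \(m<j\) we have \(\alpha_i=\beta_i\), so these components agree. Each \(\gamma_i\) is monic, so the first index where \(\gamma\circ\alpha\) and \(\gamma\circ\beta\) differ is still \(j\). Strict compatibility in \(\CC\) gives
\[
\gamma_{i_j}\circ\alpha_{i_j}<\gamma_{i_j}\circ\beta_{i_j},
\]
hence \(\gamma\circ\alpha<\gamma\circ\beta\). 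Strict preservation at the first differing coordinate already implies that this coordinate remains a point of difference, so monicity is in fact not needed for this step.

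Corollary~\ref{cor:subword-embedding} now yields that \([I,\CC]\) is weakly word-embeddable.
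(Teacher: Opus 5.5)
Your proof is correct and follows the paper's own argument essentially step for step. Acyclicity and monicity are checked componentwise in both, and both use the lexicographic order on components along a fixed enumeration of \(\Ob(I)\), with strict compatibility at the first differing coordinate. Your extra remarks are accurate refinements of that same route: you check explicitly that every endomorphism of \(F\) is \(\id_F\), and you observe that monicity is not needed to keep the first differing index.
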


\begin{proof}
Finiteness is clear.

Acyclicity: suppose that there are natural transformations
\(\eta\colon F\Rightarrow G\) and \(\mu\colon G\Rightarrow F\). For each object \(i\in I\), we have morphisms
\[
\eta_i\colon F(i)\to G(i),
\qquad
\mu_i\colon G(i)\to F(i).
\]
Since \(\CC\) is acyclic, these force \(F(i)=G(i)\) and
\(\eta_i=\mu_i=\id_{F(i)}\). Naturality of \(\eta\), whose components are identities, then gives \(F(\alpha)=G(\alpha)\) for every morphism \(\alpha\) of \(I\). Hence \(F=G\) and \(\eta=\id_F\).

Monicity: let \(\eta\colon F\Rightarrow G\) be a natural transformation, and let \(\alpha,\beta\colon H\Rightarrow F\) satisfy
\[
\eta\circ\alpha=\eta\circ\beta.
\]
Then for every object \(i\in I\),
\[
\eta_i\circ\alpha_i=\eta_i\circ\beta_i.
\]
Since every morphism of \(\CC\) is monic, \(\eta_i\) is monic, and hence \(\alpha_i=\beta_i\). Therefore \(\alpha=\beta\), so \(\eta\) is monic.

For the local order, fix a linear extension
\[
i_1<i_2<\cdots<i_n
\]
of the objects of \(I\). For two functors \(F,G\), order natural transformations \(\eta,\mu\colon F\Rightarrow G\) lexicographically: set \(\eta<\mu\) if, at the first index \(i_k\) for which
\(\eta_{i_k}\neq\mu_{i_k}\), one has
\[
\eta_{i_k}<\mu_{i_k}
\]
in \(\Hom_{\CC}(F(i_k),G(i_k))\).

Now let \(\eta<\mu\colon F\Rightarrow G\), and let \(\beta\colon G\Rightarrow K\). At the first index \(i_k\) where \(\eta\) and
\(\mu\) differ, left local order preservation in \(\CC\) gives
\[
\beta_{i_k}\circ\eta_{i_k} < \beta_{i_k}\circ\mu_{i_k}.
\]
For earlier indices the components remain equal. Hence
\[
\beta\circ\eta<\beta\circ\mu.
\]
Thus postcomposition preserves the lexicographic order.
\end{proof}

\subsubsection{Finite limits}

\begin{prop}
The class of finite acyclic monic left locally totally ordered categories is closed under finite limits. Consequently, any finite limit of weakly word-embeddable categories is weakly word-embeddable.
\end{prop}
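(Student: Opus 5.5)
Finite limits in the (large) category of small categories are built from a terminal object, finite products and equalizers. So we plan to check the three properties (acyclic, monic, left locally totally ordered), together with finiteness, for each of these three constructions separately. The result then follows for arbitrary finite limits, and weak word-embeddability follows from Corollary~\ref{cor:subword-embedding}.

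\emph{The terminal category.} The terminal category \(\mathbf 1\), with one object and only its identity, trivially has all properties.

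\emph{Finite products.} It suffices to treat binary products \(\CC\times\Dc\). Objects are pairs and \(\Hom((c,d),(c',d'))=\Hom(c,c')\times\Hom(d,d')\).
\begin{itemize}
\item Finiteness is clear.
\item Acyclicity: morphisms both ways between \((c,d)\) and \((c',d')\) give morphisms both ways in each factor. This forces \(c=c'\) and \(d=d'\), with only identity endomorphisms.
\item Monicity holds componentwise.
\item Local order: we use the lexicographic order on \(\Hom(c,c')\times\Hom(d,d')\), comparing first components and then second components. If \((f_1,g_1)<(f_2,g_2)\) and \((u,v)\) is postcomposed, there are two cases. If \(f_1<f_2\), then \(u f_1<u f_2\). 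If \(f_1=f_2\) and \(g_1<g_2\), then \(uf_1=uf_2\) and \(vg_1<vg_2\). Either way strict preservation holds. This is the same argument as in the functor-category proposition, which in fact already covers products, since \(\CC\times\Dc\) can be handled identically.
\end{itemize}

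\emph{Equalizers.} Given functors \(F,G\colon\CC\to\Dc\), the equalizer in \(\mathbf{Cat}\) is the subcategory \(E\subseteq\CC\) consisting of the objects \(c\) with \(F(c)=G(c)\) and the morphisms \(f\) with \(F(f)=G(f)\). This is closed under identities and composition, so it is a subcategory. The key observation is that every subcategory of a category with our properties inherits them.
\begin{itemize}
\item Finiteness is inherited.
\item Acyclicity is inherited, because a cycle in \(E\) is a cycle in \(\CC\).
\item Monicity is inherited: if \(h\circ g=h\circ g'\) in \(E\), the same equation holds in \(\CC\), so \(g=g'\).
\item The restricted total orders on \(\Hom_E(x,y)\subseteq\Hom_\CC(x,y)\) remain compatible with postcomposition by morphisms of \(E\), because composites in \(E\) are computed in \(\CC\).
\end{itemize}
Only the properties of \(\CC\) are used here, not those of \(\Dc\).

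\emph{The main obstacle.} The key step is making precise what ``finite limit'' of categories means, so that the reduction to products and equalizers is legitimate. We intend finite limits in \(\mathbf{Cat}\) of diagrams whose objects lie in the class. The standard theorem that a category has finite limits if it has a terminal object, binary products and equalizers, with the limit realized as an equalizer of two maps between finite products, reduces everything to the cases above. One subtlety is that the equalizer step only needs the source \(\prod_i\CC_i\) to lie in the class, which the product step guarantees. It also deserves a remark that the local orders on the limit depend on the chosen ordering of the factors in the lexicographic order; any choice works.
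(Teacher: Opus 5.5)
Your proposal is correct and follows essentially the same route as the paper. Like the paper, you reduce to the terminal category, finite products with the lexicographic order on hom-sets, and equalizers realized as subcategories that inherit every property by restriction; your added remark that the equalizer step only needs the source product to lie in the class is a precision the paper leaves implicit.
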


\begin{proof}
Every finite limit can be built from finite products, equalizers and a terminal object \cite{Basic_Category,CatWorking}. For categories the terminal category is the category with only one object and one morphism, the identity of that object. Clearly the terminal category is weakly word-embeddable.

For finite products, acyclicity and monicity are checked componentwise. The left local total order is the lexicographic order on products of hom-sets, and postcomposition acts componentwise, preserving that order.

For equalizers, let
\[
F,G\colon\mathcal C\to \mathcal D
\]
be parallel functors. Their equalizer is the subcategory of \(\mathcal C\) whose objects \(a\) satisfy \(F(a)=G(a)\), and whose morphisms \(u\colon a\to a'\) satisfy \(F(u)=G(u)\). This subcategory inherits acyclicity, monicity, and the left local total order by restriction.
\end{proof}

\section{Query games and separators}\label{sec:queries}

We now introduce a game that captures the combinatorial data used in the construction of weak word representations.

\subsection{The Boolean query game for posets}

Consider a finite poset \(P\). Two players engage in the following game. Player \(A\) secretly chooses an element \(p\in P\). Player \(B\) must determine \(p\) by asking questions of the form
\[
\text{``Is }q\leq p\text{?''}
\]
for certain elements \(q\in P\) selected in advance.

A \emph{winning set} is a subset \(W=\{p_1,\dots,p_n\}\subseteq P\) such that, for every \(p\in P\), the answers to the questions
\[
\text{``Is }p_i\leq p\text{?''}\qquad (i=1,\dots,n)
\]
uniquely determine \(p\). Equivalently, the map
\[
p\longmapsto \{\,p_i\in W\mid p_i\leq p\,\}
\] is injective.

The \emph{query dimension} of \(P\), denoted \(\mathrm{qdim}(P)\), is the minimum cardinality of a winning set.

\begin{prop}
For any finite poset \(P\),
\[
\dim^{\mathrm{w}}_2(P)\leq \mathrm{qdim}(P).
\]
\end{prop}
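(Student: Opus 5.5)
Take a winning set \(W=\{p_1,\dots,p_n\}\) of minimum cardinality \(n=\mathrm{qdim}(P)\). We will show directly that the map
\[
\varphi\colon P\longrightarrow 2^{W}\cong 2^{[n]},\qquad p\longmapsto \{\,p_i\in W\mid p_i\leq p\,\}=W\cap{\downarrow p},
\]
is injective and order-preserving. Since \(|W|=n\), this witnesses \(\dim^{\mathrm{w}}_2(P)\leq n\). The identification \(2^{W}\cong 2^{[n]}\) is just a relabelling \(p_i\mapsto i\), which preserves inclusion.

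Injectivity is exactly the defining property of a winning set, as reformulated in the text: the answers to the questions ``Is \(p_i\leq p\)?'' determine \(p\).

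For order preservation, suppose \(p\leq q\) and \(p_i\in\varphi(p)\). Then \(p_i\leq p\), and transitivity gives \(p_i\leq q\), so \(p_i\in\varphi(q)\). Hence \(\varphi(p)\subseteq\varphi(q)\).

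Equivalently, \(\varphi\) is the principal downset embedding \(P\to 2^P\) of Lemma~\ref{lem:poset-downset-construction} followed by the order-preserving restriction map \(2^P\to 2^W\), \(S\mapsto S\cap W\). Under the word picture, this amounts to keeping only the letters of \(w_p\) that lie in \(W\).

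No step is a genuine obstacle. The only point to watch is that the inequality needs only an injective order-preserving map, not an order embedding. Restricting to \(W\) may destroy order reflection, which is why the bound is for \(\dim^{\mathrm{w}}_2\) and not for \(\dim_2\).
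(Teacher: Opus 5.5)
Your proof is correct and matches the paper's argument: the paper also sends \(p\) to \(W_p=\{p_i\in W\mid p_i\leq p\}\), gets injectivity from the winning property and order preservation from transitivity, and then takes \(n=\mathrm{qdim}(P)\). Your remark that this map factors as the principal downset map followed by the restriction \(S\mapsto S\cap W\) is a correct gloss, but the proof does not need it.
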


\begin{proof}
Let \(W=\{p_1,\dots,p_n\}\) be a winning set. For each \(p\in P\), define
\[
W_p=\{\,p_i\in W\mid p_i\leq p\,\}.
\]
Since \(W\) is winning, the assignment \(p\mapsto W_p\) is injective. If \(p\leq q\), then every element of \(W\) lying below \(p\) also lies below \(q\), so \(W_p\subseteq W_q\). Thus
\[
p\longmapsto W_p
\]
is an injective order-preserving map from \(P\) to \(2^W\cong 2^{[n]}\). Therefore \(\dim^{\mathrm{w}}_2(P)\leq n\). Taking \(n=\mathrm{qdim}(P)\) gives the result.
\end{proof}

\subsection{The categorical query game}

Let \(\CC\) be a finite category. The game proceeds as follows:
\begin{enumerate}
\item Player \(A\) secretly chooses a morphism \(f\colon x\to y\).
\item Player \(B\) selects a finite set \(B=\{b_1,\dots,b_n\}\) of test
objects.
\item First, \(B\) asks for each \(t_i\) the cardinalities
\[
|\Hom(b_i,x)|,
\qquad
|\Hom(b_i,y)|.
\]
If this suffices to determine \(x\) and \(y\), the game proceeds.
\item Then, for each \(t_i\in B\) and each morphism \(h\colon t_i\to x\),
Player \(B\) asks for the composite \(f\circ h\).
\end{enumerate}

A set \(B\) is \emph{winning} if Player \(B\) can always determine the secret morphism \(f\) uniquely.

\begin{definition}
Let \(\CC\) be a category. A set \(B\subseteq\Ob(\CC)\) is
\begin{itemize}
\item a \emph{cardinal separator} if the map
\[
X\longmapsto (|\Hom(b,x)|)_{b\in B}
\]
is injective on objects;
\item a \emph{separator} if for every pair of distinct parallel morphisms \(f,g\colon \allowbreak x\to y\), there exist \(b\in B\) and \(h\colon b\to x\) such that
\[f\circ h\neq g\circ h.\]
\end{itemize}
\end{definition}

\begin{theorem}\label{thm:winning}
A set \(B\) is winning if and only if it is both a cardinal separator and a separator.
\end{theorem}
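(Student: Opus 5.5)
We prove the two implications separately. The main work is to turn the informal rules of the game into a precise statement about information. A play is determined by the secret morphism \(f\colon x\to y\). The answers Player \(B\) receives are the cardinality vector
\[
\bigl((|\Hom(b,x)|,|\Hom(b,y)|)\bigr)_{b\in B},
\]
followed, once \(x\) is known, by the family of composites \((f\circ h)_{b\in B,\,h\colon b\to x}\). We index this family by \(h\), which is legitimate because \(x\) has already been determined. Then \(B\) is winning exactly when this total answer map, defined on the set of all morphisms of \(\CC\), is injective, and in addition the first stage alone always determines the pair \((x,y)\).

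\((\Leftarrow)\) Assume \(B\) is a cardinal separator and a separator. Since \(X\mapsto(|\Hom(b,X)|)_{b\in B}\) is injective on objects, the first-stage answers determine \(x\) and \(y\), so the game always proceeds. Now suppose \(f,g\colon x\to y\) produce the same second-stage answers, that is, \(f\circ h=g\circ h\) for every \(b\in B\) and every \(h\colon b\to x\). If \(f\neq g\), the separator property gives some \(b\in B\) and \(h\colon b\to x\) with \(f\circ h\neq g\circ h\), a contradiction. Hence \(f=g\), and Player \(B\) recovers \(f\).

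\((\Rightarrow)\) Assume \(B\) is winning. For the cardinal separator property, take distinct objects \(x\neq x'\) with the same cardinality vector. Let Player \(A\) choose \(\id_x\) or \(\id_{x'}\). The first-stage answers coincide, so \(x\) is not determined. This violates the rule that the first stage must determine the endpoints, so \(B\) would not be winning.

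For the separator property, take distinct parallel \(f,g\colon x\to y\) such that \(f\circ h=g\circ h\) for all \(b\in B\) and \(h\colon b\to x\). These two choices give identical first-stage answers, since they share endpoints, and identical second-stage answers. So Player \(B\) cannot distinguish them, contradicting winning.

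The main obstacle is the interpretive step: making precise that ``winning'' includes the requirement that the first stage always succeeds. Under a looser reading, Player \(B\) might instead infer the objects from the second-stage morphisms themselves. We will adopt the game's stated rule, ``if this suffices to determine \(x\) and \(y\), the game proceeds'', as part of the definition of winning. Once that convention is fixed, both directions above are direct.
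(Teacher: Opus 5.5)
Your proof is correct and follows the paper's own two-direction argument: the first phase requires, and is secured by, the cardinal separator property, and the second phase requires, and is secured by, the separator property. Two points in your write-up make precise what the paper's proof leaves implicit: your use of the identities \(\id_x,\id_{x'}\) to show that the first phase must separate all objects, and your explicit convention that winning includes success of the first phase.
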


\begin{proof}
If \(B\) is winning, the first phase must distinguish objects, so the
cardinality vectors are distinct. The second phase must distinguish parallel morphisms; otherwise two distinct morphisms with the same endpoints would produce identical answers. Hence \(B\) is both a cardinal separator and a separator.

Conversely, suppose \(B\) has both properties. The first phase determines \(X\) and \(Y\). Once the endpoints are known, the answers in the second phase test candidate morphisms \(X\to Y\). Since \(B\) separates parallel morphisms, at most one morphism can produce those answers. Hence \(B\) is winning.
\end{proof}

When \(\CC\) is a poset, the second condition is vacuous, and the first condition reduces to the Boolean query game described above.

\begin{theorem}\label{thm:game-embedding}
Let \(\CC\) be a finite acyclic category which is weakly word-embeddable, and let \(B\subseteq\Ob(\CC)\) be a winning set. Then there exists an alphabet
\(\Sigma=B\) and a word \(w\) such that
\[
\CC\longrightarrow \Sub_w
\]
is a weak embedding. The length of \(w\) can be taken to be
\[
\sum_{b\in B}\max_{c\in\Ob(\CC)}|\Hom(b,c)|.
\]
\end{theorem}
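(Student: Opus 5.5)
The plan is to repeat the construction from the proof of Theorem~\ref{thm:embed}, keeping only the blocks indexed by the winning set \(B\). Since \(\CC\) is weakly word-embeddable, Theorem~\ref{thm:embed} (together with Corollary~\ref{cor:subword-embedding}) shows that \(\CC\) is monic and left locally totally ordered; fix such a local order. Choose an enumeration \(b_1,\dots,b_n\) of \(B\), for instance compatible with a linear extension. For each object \(c\) put
\[
w_c=b_1^{|\Hom(b_1,c)|}\cdots b_n^{|\Hom(b_n,c)|}
\]
over the alphabet \(\Sigma=B\). Let \(w=b_1^{M_1}\cdots b_n^{M_n}\), where \(M_j=\max_{c}|\Hom(b_j,c)|\). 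Every \(w_c\) is a subword of \(w\), and \(w\) has exactly the stated length. (By Proposition~\ref{prop:product-decomposition}, \(\Sub_w\) is a product of thickened chains.)

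Next, each morphism \(f\colon c\to d\) must be sent to a subword inclusion, exactly as in Theorem~\ref{thm:embed}. For each \(j\), postcomposition \(f_*\colon\Hom(b_j,c)\to\Hom(b_j,d)\) is injective because \(f\) is monic. It is strictly order-preserving because of the local total order. Hence it induces a strictly increasing map \(\phi_j\) from the occurrences of \(b_j\) in \(w_c\) to those in \(w_d\). Define \(F(f)\) blockwise by these maps. The facts that \(F\) preserves identities and composition follow verbatim from \((g\circ f)_*=g_*\circ f_*\), as in the earlier proof.

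The two conditions for a weak embedding now come directly from Theorem~\ref{thm:winning}. First, the map \(c\mapsto w_c\) is injective on objects. Indeed, \(w_c\) records precisely the vector \((|\Hom(b,c)|)_{b\in B}\), and \(B\) is a cardinal separator. Second, \(F\) is faithful. Suppose \(f\neq g\colon c\to d\). Since \(B\) is a separator, there exist \(b_j\in B\) and \(h\colon b_j\to c\) with \(f\circ h\neq g\circ h\). Let \(h\) sit at position \(p\) in the \(b_j\)-block of \(w_c\). Then \(F(f)\) and \(F(g)\) send this position to the ranks of \(f\circ h\) and \(g\circ h\) in \(\Hom(b_j,d)\), and these ranks are distinct. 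Therefore \(F(f)\neq F(g)\).

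The main obstacle is conceptual rather than technical. Unlike the proof of Theorem~\ref{thm:embed}, the block of \(c\) itself is no longer available to detect faithfulness, because \(c\) need not lie in \(B\). The separator property is exactly what replaces it. It is also worth checking carefully that \(F\) is well defined on the empty blocks, where \(\Hom(b_j,c)=\varnothing\).
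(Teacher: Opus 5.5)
Your proposal is correct and follows essentially the same route as the paper. The paper's proof uses the same block words \(w_c=b_1^{m_1(c)}\cdots b_r^{m_r(c)}\), the same blockwise maps \(\phi_j^f\) induced by postcomposition, injectivity on objects from the cardinal separator property, faithfulness from the separator property, and the same ambient word \(w=b_1^{M_1}\cdots b_r^{M_r}\).
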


\begin{proof}
Since \(\CC\) is weakly word-embeddable, Corollary~\ref{cor:subword-embedding} implies that \(\CC\) is monic and left locally totally ordered. Fix such a left local total order on its hom-sets. Write
\[
B=\{b_1,\dots,b_r\}
\]
in some fixed order by a linear extension of
\(\CC\). For each object \(c\in\Ob(\CC)\), define
\[
m_j(c)=|\Hom(b_j,c)|.
\]
We associate to \(c\) the word
\[
w_c =b_1^{m_1(c)}b_2^{m_2(c)} \cdots b_r^{m_r(c)}
\]
over the alphabet \(B\).

We first show that the assignment \(c\mapsto w_c\) is injective on objects. Since \(B\) is winning, it is in particular a cardinal separator. Hence, if \(c\neq d\), then
\[
\bigl(|\Hom(b,c)|\bigr)_{b\in B}\neq \bigl(|\Hom(b,d)|\bigr)_{b\in B}.
\]
Equivalently, \(m_j(c)\neq m_j(d)\) for at least one \(j\). Therefore the words \(w_c\) and \(w_d\) have different block lengths, and so
\[
w_c\neq w_d.
\]
Thus the assignment is injective on objects.

Now let \(f\colon c\to d\) be a morphism of \(\CC\). We define a subword inclusion
\[
F(f)\colon w_c\to w_d.
\]
Fix \(j\in\{1,\dots,r\}\). Since each hom-set is totally ordered, write
\[
\Hom(b_j,c)=\{h_{j,1}<h_{j,2}<\cdots<h_{j,m_j(c)}\},
\]
and
\[
\Hom(b_j,d)=\{k_{j,1}<k_{j,2}<\cdots<k_{j,m_j(d)}\}.
\]
Postcomposition with \(f\) gives a map
\[
f_*\colon \Hom(b_j,c)\longrightarrow \Hom(b_j,d),
\qquad
h\longmapsto f\circ h.
\]
Since \(\CC\) is monic, \(f_*\) is injective. Since \(\CC\) is left locally totally ordered, \(f_*\) is strictly order-preserving. Therefore there is a unique strictly increasing map
\[
\phi_j^f\colon
\{1,\dots,m_j(c)\}
\longrightarrow
\{1,\dots,m_j(d)\}
\]
such that
\[
f\circ h_{j,p}=k_{j,\phi_j^f(p)}
\]
for every \(p\).

We now define \(F(f)\) blockwise. The \(p\)-th occurrence of the letter \(b_j\) in \(w_c\) is sent to the \(\phi_j^f(p)\)-th occurrence of the letter \(b_j\) in \(w_d\). Since each \(\phi_j^f\) is strictly increasing, and since the block order \(b_1,\dots,b_r\) is the same in \(w_c\) and \(w_d\), this defines an injective order-preserving map of positions
\[
F(f)\colon [|w_c|]\longrightarrow [|w_d|].
\]
Moreover, it preserves labels by construction. Hence \(F(f)\) is a subword inclusion \(w_c\to w_d\).

This assignment is functorial. If \(f=\id_c\), then postcomposition with \(\id_c\) is the identity on every \(\Hom(b_j,c)\), so each
\(\phi_j^{\id_c}\) is the identity map. Hence
\[
F(\id_c)=\id_{w_c}.
\]
If \(f\colon c\to d\) and \(g\colon d\to e\), then for every \(j\) we have
\[
(g\circ f)_*=g_*\circ f_*.
\]
Consequently,
\[
\phi_j^{g\circ f}=\phi_j^g\circ \phi_j^f.
\]
It follows block by block that
\[
F(g\circ f)=F(g)\circ F(f).
\]
Thus \(F\colon\CC\to\Word(B)\) is a functor.

We next prove that \(F\) is faithful. Let
\[
f,g\colon c\to d
\]
be distinct parallel morphisms. Since \(B\) is winning, it is a separator. Therefore there exist \(b_j\in B\) and \(h\colon b_j\to c\) such that
\[
f\circ h\neq g\circ h.
\]
Write \(h=h_{j,p}\) in the ordered list of \(\Hom(b_j,c)\). Under \(F(f)\), the \(p\)-th occurrence of \(b_j\) in \(w_c\) is sent to the occurrence of \(b_j\) in \(w_d\) corresponding to \(f\circ h\). Under \(F(g)\), it is sent to the occurrence corresponding to \(g\circ h\). These two morphisms are distinct, hence they occupy different positions in the ordered list of
\(\Hom(b_j,d)\). Therefore
\[
F(f)\neq F(g).
\]
Thus \(F\) is faithful.

It remains to ensure that the image lands in a single subword category \(\Sub_w\) with the stated length. For each \(j\), set
\[
M_j=\max_{c\in\Ob(\CC)}|\Hom(b_j,c)|.
\]
Define
\[
w=b_1^{M_1}b_2^{M_2}\cdots b_r^{M_r}.
\]
Then, for every object \(c\), the word
\[
w_c=b_1^{m_1(c)}\cdots b_r^{m_r(c)}
\]
is a subword of \(w\), because \(m_j(c)\leq M_j\) for all \(j\). Hence the functor \(F\) lands in the full subcategory \(\Sub_w\subseteq\Word(B)\).

Finally,
\[
|w|=\sum_{j=1}^r M_j=\sum_{b\in B}\max_{c\in\Ob(\CC)}|\Hom(b,c)|.
\]
Therefore \(F\colon\CC\to\Sub_w\) is a weak embedding with the claimed word length.
\end{proof}

Thus every winning set \(B\) gives an explicit upper bound
\[
\operatorname{wwdim}(\CC)
\leq
\sum_{b\in B}\max_{c\in\Ob(\CC)}|\Hom(b,c)|.
\]
It is therefore natural to introduce the weighted query bound
\[
\operatorname{wq}(\CC)= \min_{B\text{ winning}} \sum_{b\in B}\max_{c\in\Ob(\CC)}|\Hom(b,c)|.
\]
Then
\[
\operatorname{wwdim}(\CC)\leq \operatorname{wq}(\CC).
\]
The unweighted quantity \(\mathrm{qdim}(\CC)\), the minimum size of a winning set, does not by itself bound the weak word dimension in general, because a single test object may contribute several occurrences of a letter.

\begin{example}
Let \(\mathrm{Sur}\) be the category generated by
\[
\begin{tikzcd}
a \arrow[r, "f"]
&
b \arrow[r, "g_1", bend left] \arrow[r, "g_2"', bend right]
&
c
\end{tikzcd}
\]
with the relation
\[
g_1\circ f=g_2\circ f.
\]
This category is monic and left locally totally ordered, and hence is weakly word-embeddable.

A direct application of Theorem~\ref{thm:embed}, using the linear extension
\(a<b<c\), gives a weak representation by
\[
w_a=a,
\qquad
w_b=ab,
\qquad
w_c=abbc.
\]
The morphisms may be represented as
\[
f=(1),
\qquad
g_1=(1,2),
\qquad
g_2=(1,3),
\]
and indeed \(g_1\circ f=g_2\circ f\).

Now consider the query game. The singleton \(B=\{b\}\) is winning:
\begin{itemize}
\item \(|\Hom(b,a)|=0\), \(|\Hom(b,b)|=1\), and \(|\Hom(b,c)|=2\), so
the map \(X\mapsto |\Hom(b,X)|\) is injective on objects;
\item the only nontrivial parallel morphisms are \(g_1,g_2\colon b\to c\),
and
\[
g_1\circ\id_b=g_1\neq g_2=g_2\circ\id_b,
\]
so \(B\) separates them.
\end{itemize}
Theorem~\ref{thm:game-embedding} therefore gives a weak representation over
the alphabet \(\{b\}\) using a word of length
\[
\max_{x\in\Ob(\mathrm{Sur})}|\Hom(b,x)|=2.
\]
Thus \(\mathrm{Sur}\) weakly embeds into \(\Sub_{bb}\). A word of length \(1\)
cannot represent the two distinct morphisms \(g_1,g_2\), so
\[
\operatorname{wwdim}(\mathrm{Sur})=2.
\]
\end{example}
\begin{example}
Let \(\CC\) be the category generated by the following diagram
\[
\begin{tikzcd}[column sep=3em, row sep=3em]
                                       & b_1 \arrow[rd, "g_2^1", bend left] \arrow[rd, "g_1^1"', bend right] &   \\
a \arrow[ru, "f^1"] \arrow[rd, "f^2"'] &                                                                     & c \\
                                       & b_2 \arrow[ru, "g^2_1", bend left] \arrow[ru, "g^2_2"', bend right] &  
\end{tikzcd}
\]
with relations
\[
g_1^1\circ f^1=g_1^2\circ f^2,
\qquad
g_2^1\circ f^1=g_2^2\circ f^2,
\]
and no further relations. We claim that
\[
B=\{a,b_1,b_2\}
\]
is a winning set.

First we check that \(B\) is a cardinal separator. The hom-set cardinality
vectors, ordered with respect to \(B=(a,b_1,b_2)\), are as follows:
\[
\begin{array}{c|c}
x & \bigl(|\Hom(a,x)|,\ |\Hom(b_1,x)|,\ |\Hom(b_2,x)|\bigr) \\ \hline
a   & (1,0,0) \\
b_1 & (1,1,0) \\
b_2 & (1,0,1) \\
c   & (2,2,2).
\end{array}
\]
Indeed, the two morphisms \(a\to c\) are
\[
g_1^1\circ f^1=g_1^2\circ f^2
\qquad\text{and}\qquad
g_2^1\circ f^1=g_2^2\circ f^2,
\]
while the two morphisms \(b_1\to c\) are \(g_1^1,g_2^1\), and the two
morphisms \(b_2\to c\) are \(g_1^2,g_2^2\). Since the four vectors displayed
above are distinct, \(B\) is a cardinal separator.

We now check that \(B\) is a separator. The only nontrivial parallel pairs of
morphisms are
\[
g_1^1,g_2^1\colon b_1\to c,
\qquad
g_1^2,g_2^2\colon b_2\to c,
\]
and the two composites from \(a\) to \(c\):
\[
h_1:=g_1^1\circ f^1=g_1^2\circ f^2,
\qquad
h_2:=g_2^1\circ f^1=g_2^2\circ f^2.
\]
Every pair is separated by composing with the identity morphism of the domain. Thus \(B\) is a separator. 

Now we will use Theorem~\ref{thm:game-embedding} to obtain a word embedding.
For \(B=(a,b_1,b_2)\), we have
\[
\max_x |\Hom(a,x)| =\max_x |\Hom(b_1,x)|= \max_x |\Hom(b_2,x)|=2.
\]
Therefore the theorem gives a word of length \(6\), namely
\[
w=aab_1b_1b_2b_2.
\]
The associated words are
\[
w_a=a,\qquad
w_{b_1}=ab_1,\qquad
w_{b_2}=ab_2,\qquad
w_c=aab_1b_1b_2b_2.
\]
For instance, if we order
\[
h_1<h_2,\qquad
g_1^1<g_2^1,\qquad
g_1^2<g_2^2,
\]
then the morphisms may be represented by the following subword inclusions:
\[
F(f^1)=(1)\colon a\to ab_1,
\qquad
F(f^2)=(1)\colon a\to ab_2,
\]
\[
F(g_1^1)=(1,3),\qquad
F(g_2^1)=(2,4),
\]
and
\[
F(g_1^2)=(1,5),\qquad F(g_2^2)=(2,6).
\]
With these choices,
\[
F(g_1^1)\circ F(f^1)=F(g_1^2)\circ F(f^2),
\qquad
F(g_2^1)\circ F(f^1)=F(g_2^2)\circ F(f^2),
\]
as required.

In this case we have only a weak embedding since, for example, there are more than \(2\) morphisms between \(F(b_1)=ab_1\) and \(F(c)=aab_1b_1b_2b_2\).
\end{example}
\bibliographystyle{plain}
\bibliography{biblio}

\end{document}